\definecolor{vert}{rgb}{0,0.6,0}
 \theoremstyle{plain}
 \newtheorem{thm}{Theorem}[section]
 \newtheorem{lem}[thm]{Lemma}
 \newtheorem{prop}[thm]{Proposition}
 \theoremstyle{definition}
 \newtheorem{rem}[thm]{Remark}
 \numberwithin{equation}{section}
\newcommand{\C}{\mathbb{C}}
\newcommand{\E}{\mathbb{E}}
\newcommand{\N}{\mathbb{N}}
\newcommand{\bP}{\mathbb{P}}
\newcommand{\R}{\mathbb{R}}
\newcommand{\T}{\mathbb{T}}
\newcommand{\cM}{\mathcal{M}}
\newcommand{\cE}{\mathcal{E}}
\newcommand{\AC}{{\rm AC\,}}
\newcommand{\Li}{L^{\infty}}
\newcommand{\W}{W^{1,\infty}}
\newcommand{\Q}{\mathbb{T}^{n}\times(0,\infty)}
\newcommand{\cQ}{\mathbb{T}^{n}\times[0,\infty)}
\newcommand{\al}{\alpha}
\newcommand{\gam}{\gamma}
\newcommand{\del}{\delta}
\newcommand{\ep}{\varepsilon}
\newcommand{\lam}{\lambda}
\newcommand{\om}{\omega}
\newcommand{\Del}{\Delta}
\newcommand{\ol}{\overline}
\newcommand{\ul}{\underline}
\newcommand{\pl}{\partial}
\newcommand{\supp}{{\rm supp}\,}
\newcommand{\inter}{{\rm int}\,}
\newcommand{\co}{{\rm co}\,}
\newcommand{\dom}{{\rm dom}\,}
\begin{document}
\title
{A dynamical approach to the large-time behavior of solutions to weakly coupled systems of Hamilton--Jacobi equations}


\author{H. Mitake}
\ead{mitake@math.sci.fukuoka-u.ac.jp}
\address{Department of Applied Mathematics, 
Faculty of Science, Fukuoka University, 
Fukuoka 814-0180, Japan}

\author{H. V. Tran}
\ead{tvhung@math.berkeley.edu}
\address{Department of Mathematics, 
The University of Chicago, 5734 S. University Avenue Chicago, Illinois 60637, USA}

\begin{keyword}
Large-time Behavior; Hamilton--Jacobi Equations; 
Weakly Coupled Systems; Ergodic Problem; Switching Cost Problems; Piecewise-deterministic Markov processes; Viscosity Solutions. 
\MSC[2010]{
35B40 \sep 
35F55 \sep 
49L25 
}
\end{keyword}

\date{\today}

\begin{abstract}
We investigate the large-time behavior of the value functions 
of the optimal control problems on the $n$-dimensional torus which appear 
in the dynamic programming for the system whose states are governed 
by random changes. 
From the point of view of the study on partial differential equations, 
it is equivalent to consider viscosity solutions of quasi-monotone 
weakly coupled systems of Hamilton--Jacobi equations. 
The large-time behavior of viscosity solutions of this problem 
has been recently studied by 
{the authors and 
Camilli, Ley, Loreti, and Nguyen} for some special cases, independently, 
but the general cases remain widely open.  
We establish a convergence result to asymptotic solutions as time goes 
to infinity under rather general assumptions by using dynamical 
properties of value functions. \\
\bigskip

\noindent
\textbf{R\'esum\'e}\\
Nous \'etudions le comportement en temps grand des fonctions valeur associ\'es aux probl\`emes de contr\^ole optimal sur le  tore $n$-dimensionnel qui apparaissent dans le cadre de la programmation dynamique des syst\`emes dont les \'etats sont gouvern\'es par des changements al\'eatoires. 
Du point de vue de l'\'etude des \'equations aux d\'eriv\'ees partielles, il est \'equivalent de consid\'erer des solutions de viscosit\'e des syst\`emes de Hamilton--Jacobi quasi-monotones faiblement coupl\'es. 
Le comportement en temps grand des solutions de viscosit\'e de ce probl\`eme a \'et\'e r\'ecemment \'etudi\'e par les auteurs, ainsi que par Camilli, Ley, Loreti, et Nguyen pour certains cas particuliers, 
de fa\c{c}on ind\'{e}pendante, mais les cas g\'en\'eraux restent largement ouverts. 
Nous \'etablissons un r\'esultat de convergence asymptotique des solutions sous des hypoth\`eses assez g\'en\'erales,  en utilisant des propri\'et\'es dynamiques des fonctions valeur. 
\end{abstract}

\maketitle



\section{Introduction and Main Result}
In this paper we deal with optimal control problems, or calculus
of variations, 
which appear in the dynamic programming for the system 
whose states are governed by random changes. 
More precisely, we consider the minimizing problem: 
\begin{equation}\label{def-value}
\text{Minimize} \quad 
\E_{i}\Big[\int_{-t}^{0}
L_{\nu(s)}(\gam(s),\dot{\gam}(s))\,ds
+g_{\nu(-t)}(\gam(-t))\Big], 
\end{equation}
over all controls $\gam\in\AC([-t,0])$ with $\gam(0)=x$ 
for any fixed $(x,t)\in\cQ$, 
where the Lagrangians $L_{i}(x,q):\T^n \times \R^n\to\R$ are 
derived from the Fenchel-Legendre transforms of given Hamiltonians 
$H_i$ and we denote by $\AC([-t,0])$ the set of 
absolutely continuous functions 
on $[-t,0]$ with values in $\T^n$.  
The functions $g_{i}$ are given real-valued continuous functions on $\T^n$ for $i=1,2$. 
Here $\E_{i}$ denotes the expectation of 
a process with $\nu(0)=i$, where 
$\nu$  is a $\{1,2\}$-valued process which 
is a continuous-time Markov chain 
on $(-\infty,0]$
(notice that time is reversed)
such that 
for $s\le 0$, $\Del s >0$,
\begin{equation}\label{markov}
\bP\big(\nu(s-\Del s)=j\mid 
\nu(s)=i\big)=c_i \Del s+o(\Del s) \ 
\textrm{as} \ \Del s\to0 \ \textrm{for} \ i\not=j, 
\end{equation}
where $c_i$ are given positive constants and 
$o:[0,\infty)\to [0,\infty)$ is a function
satisfying $o(r)/r\to0$ as $r\to0$. 
We call the minimizing costs of \eqref{def-value} the \textit{value functions} of optimal control problems \eqref{def-value}.

The purpose of this paper is to investigate the large-time behavior of the value
functions. 
From the point of view of partial differential equations 
it is equivalent to 
study that of viscosity solutions of
quasi-monotone weakly coupled systems of Hamilton--Jacobi equations 
\begin{numcases}
{\textrm{(C)} \hspace{1cm}}
(u_{1})_t + H_1(x,Du_{1}) + c_1 (u_{1}-u_{2}) = 0
& in $\Q$, \nonumber \\
(u_{2})_t + H_2(x,Du_{2}) + c_2 (u_{2}-u_{1}) = 0
& in $\Q$, \nonumber \\
u_{i}(x,0)=g_{i}(x) \ 
& 
on $\T^{n}$, 
\nonumber
\end{numcases}
where 
the Hamiltonians $H_{i}(x,p):\T^n \times \R^n\to\R$ are 
given continuous functions for $i=1,2$, 
which are assumed throughout the paper to satisfy the followings. 
\begin{itemize}
\item[{\rm(A1)}]
The functions $H_{i}$ are  uniformly coercive in the $p$-variable, 
i.e., 
$$
\lim_{r\to\infty}\inf\{H_{i}(x,p)\mid x\in{\T^n}, 
|p|\ge r \}=\infty.
$$
\item[{\rm(A2)}]
The functions $p\mapsto H_{i}(x,p)$ are strictly convex 
for any $x\in\T^n$. 
\end{itemize}
Here $u_i$ are real-valued unknown functions on $\T^n \times [0,\infty)$ and
$(u_i)_t=\partial u_i/\partial t$, 
$Du_i=(\partial u_i/\partial x_1,\dots, \partial u_i/\partial x_n)$ for $i=1,2$, respectively.
We are only dealing with viscosity solutions of Hamilton--Jacobi equations here and thus the term
``viscosity" will be omitted henceforth.

The existence and uniqueness results for weakly coupled systems (C)  
of Hamilton--Jacobi equations have been established by 
\cite{EL, IK}. 
In recent years, there have been many studies 
on the properties of viscosity solutions
of weakly coupled systems of Hamilton--Jacobi equations.  
See \cite{CGT2,MT1,MT2,CLL, CLLN} for instance.  
In particular, the studies on large-time behaviors were done for some special cases by the authors
\cite{MT1}, and 
Camilli, Ley, Loreti and Nguyen \cite{CLLN}, independently. 
However, the general cases remain widely open and the techniques developed 
in \cite{MT1, CLLN} are not applicable for general cases.
The coupling terms cause serious difficulties, which will be explained 
in details later.

Let us first recall the heuristic derivation of
the large-time asymptotics for (C) discussed by the authors \cite{MT1}
for readers' convenience.
We use the same notations as in \cite{MT1}. 
For simplicity, we assume that
$c_1=c_2=1$ henceforth.
Formal asymptotic expansions of the solutions
$u_1, u_2$ of (C) are considered to be of the forms 
\begin{align}
u_1(x,t)&=a_{01}(x)t+a_{11}(x)+a_{21}(x)t^{-1}+\ldots, \notag\\
u_2(x,t)&=a_{02}(x)t+a_{12}(x)+a_{22}(x)t^{-1}+\ldots \quad\notag
\end{align}
as  $t\to \infty$.
Then (C) becomes 
\begin{align}
a_{01}(x)-a_{21}(x)t^{-2}+\ldots+H_1(x,Da_{01}(x)t+Da_{11}(x)+Da_{21}(x)t^{-1}+\ldots)\notag\\
+(a_{01}(x)-a_{02}(x))t+(a_{11}(x)-a_{12}(x))+(a_{21}(x)-a_{22}(x))t^{-1}+\ldots=0, \label{heu-1}
\end{align}
and
\begin{align}
a_{02}(x)-a_{22}(x)t^{-2}+\ldots+H_2(x,Da_{02}(x)t+Da_{12}(x)+Da_{22}(x)t^{-1}+\ldots)\notag\\
+(a_{02}(x)-a_{01}(x))t+(a_{12}(x)-a_{11}(x))+(a_{22}(x)-a_{21}(x))t^{-1}+\ldots=0. \label{heu-2}
\end{align}
Sum up \eqref{heu-1} and \eqref{heu-2} to yield 
$$
H_1(x,D a_{01} t + D a_{11} + O(1/t)) + H_2(x,D a_{02} t + D a_{12} + O(1/t)) +O(1)=0
$$
as $t\to\infty$. 
Hence 
we formally get $D a_{01} = Da_{02} \equiv 0$ by the coercivity of $H_1$ and $H_2$. 
We next let $t \to \infty$ in \eqref{heu-1}, \eqref{heu-2} to achieve that 
$a_{01}(x)=a_{02}(x) \equiv a_{0}$ for some constant $a_{0} \in \R$, 
and
\begin{numcases}
{}
H_{1}(x,Da_{11}(x))+a_{11}(x)-a_{12}(x)=-a_{0} 
,\notag\\
H_{2}(x,Da_{12}(x))+a_{12}(x)-a_{11}(x)=-a_{0}
, \notag
\end{numcases}
in $\T^n$. It is then natural to study the ergodic problem
\begin{numcases}
{{\rm (E)} \hspace{1cm}}
H_{1}(x,Dv_{1}(x))+v_{1}-v_{2}= c 
& in  $\T^{n}$,\ \nonumber\\
H_{2}(x,Dv_{2}(x))+v_{2}-v_{1}= c 
& in  $\T^{n}$.  \nonumber
\end{numcases}
We here seek for a triplet 
$(v_{1},v_{2},c)\in C(\T^n)^{2}\times\R$ 
such that $(v_{1},v_{2})$ is a solution of (E). 
If $(v_{1},v_{2},c)$ is such a triplet, 
we call $(v_{1},v_{2})$ a \textit{pair of ergodic functions} 
and $c$ an \textit{ergodic constant}. 
It was proved in \cite{CGT2, MT1} that
 there exists a unique constant $c$ 
such that the ergodic problem (E)  has continuous solutions $(v_1, v_2)$.

Hence, our goal in this paper is to prove the following 
large-time asymptotics for (C). 
\begin{thm}[Main Result]\label{thm:main}
Assume that {\rm (A1), (A2)} hold.
For any $(g_{1},g_{2}) \in C(\T^n)^2$ there exists a solution
$(v_1,v_2,c) \in C(\T^n)^2 \times \R$ of {\rm(E)} such that 
\begin{equation}\label{conv}
u_{i}(x,t)+ct-v_{i}(x)\to0 \ \ \textrm{uniformly on} \ \T^{n} 
\ \text{as} \ t\to\infty 
\end{equation}
for $i=1,2$. 
\end{thm}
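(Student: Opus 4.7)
The plan follows the standard three-step scheme for large-time behavior of Hamilton--Jacobi equations: compactness, identification of subsequential limits as ergodic solutions, and passage from subsequential to full convergence. The first two steps rely on PDE methods, while the last step, which is the heart of the argument and the source of the main difficulty, exploits the optimal control representation \eqref{def-value}.

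For compactness, I would use coercivity (A1) together with the comparison principle for (C) to show that $\{u_i(\cdot,t)+ct\}_{t\ge 0}$ is uniformly bounded, equi-Lipschitz in $x$, and equi-continuous in $t$. The bounds are obtained by comparing $u_i$ with pairs of sub- and super-solutions of (C) built from any ergodic triple $(v_1,v_2,c)$ supplied by \cite{CGT2,MT1}, and the Lipschitz bound in $x$ follows from coercivity. Arzel\`a--Ascoli then yields subsequential limits $v_i^\infty\in C(\T^n)$ along some $t_n\to\infty$, and stability of viscosity solutions for weakly coupled systems identifies $(v_1^\infty,v_2^\infty,c)$ as a solution of (E).

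For the dynamical upgrade, set $\hat u_i(x,t):=u_i(x,t)+ct$, which admits the representation \eqref{def-value} with the shifted Lagrangian $L_i+c$. Fix any ergodic pair $(v_1,v_2)$ with constant $c$. The idea is to construct, for each $(x,i)$, a calibrated control $(\gamma^*,\nu^*)$ starting from $(x,i)$ along which the ergodic pair realizes its value, i.e.,
\[
v_i(x) = \E_i\Big[\int_{-s}^0 (L_{\nu^*(r)}+c)(\gamma^*(r),\dot\gamma^*(r))\,dr + v_{\nu^*(-s)}(\gamma^*(-s))\Big] \quad\text{for all } s\ge 0.
\]
Plugging this trajectory into the value function representation of $\hat u_i(x,t+s)$, together with an almost-optimal trajectory for $\hat u_i$ itself in the reverse direction, yields the asymptotic monotonicity bound
\[
\hat u_i(x,t+s) - v_i(x) \le \E_i\big[\hat u_{\nu^*(-s)}(\gamma^*(-s),t) - v_{\nu^*(-s)}(\gamma^*(-s))\big],
\]
which, combined with the compactness from the first step and applied with $v_i=v_i^\infty$, forces the full limit $\hat u_i(x,t)\to v_i^\infty(x)$ uniformly as $s\to\infty$ along a converging subsequence.

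The main obstacle is the coupling term $c_i(u_i-u_j)$. Unlike the scalar case, where calibrated curves are deterministic and their long-time dynamics is described by the Aubry set, here the state is $(x,i)\in\T^n\times\{1,2\}$ and any calibrated trajectory is a piecewise-deterministic Markov process governed by \eqref{markov}. One cannot argue component by component, because adding a constant to a single $v_i$ destroys the system structure, so the calibrated control must be constructed simultaneously for both components, and one must establish enough recurrence of its expected dynamics on $\T^n\times\{1,2\}$ to close the monotonicity argument. This coupled construction of calibrated switching trajectories, which has no scalar analogue, is the principal difficulty that the dynamical approach developed in the paper is designed to overcome.
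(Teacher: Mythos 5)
Your three-step outline (compactness, identification of subsequential limits, dynamical upgrade) is in the right spirit, and you correctly locate the new phenomenon in the fact that calibrated trajectories are now piecewise-deterministic Markov processes. However, the ``asymptotic monotonicity bound'' you propose,
\[
\hat u_i(x,t+s) - v_i(x) \le \E_i\bigl[\hat u_{\nu^*(-s)}(\gamma^*(-s),t) - v_{\nu^*(-s)}(\gamma^*(-s))\bigr],
\]
is the naive calibration inequality, and it does not close the argument. The reason is a circular dependency: the point $y:=\lim \gamma^*(-s)$ at which you want to apply the liminf/limsup comparison depends on the splitting of the time horizon. To exploit the limsup/liminf structure, one needs to first pin down a fixed spatial point $y$ and then choose the remaining time $\tau$ large along a sequence adapted to $y$; with your bound, the spatial endpoint shifts as you vary $\tau$, and the argument unravels. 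This defect is already present in the scalar case — which is precisely why Davini--Siconolfi and Ishii introduce the scaling trick $\eta(s)=\gamma((1+\ep)s)$: speeding up the calibrated curve decouples the spatial endpoint $\gamma(-T)$ (which is then paired with the value function at a \emph{short} time $\tau$) from the time-splitting parameter, at the cost of an error $\om$ that vanishes as $T\to\infty$. This is exactly what Theorem~\ref{thm:stability} provides.

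The second, system-specific gap is more serious and is the one the paper spends most of its effort on. Once one rescales, the Lagrangian is evaluated at $(1+\ep)\dot\gamma$, and Lemma~\ref{stab-prop2} shows that the first-order Taylor correction is not just $\ep L_i$ (as in the scalar case) but carries an extra term $-\ep(v_i-v_j)(\gam(t))$ coming from the coupling. A crude estimate of this term over the full horizon $[-T,0]$ gives an error of order $\ep T = O(\tau)$, which does not vanish as $T\to\infty$ (Remark~\ref{difficulty-2}). The resolution — and this is the genuinely new idea for systems — is the explicit weighted representation formula \eqref{C-rep} with weights $\tfrac12(1\pm e^{2s})$: because the coupling correction is multiplied by the exponentially decaying factor $e^{2t/(1+\ep)}$, its time integral stays bounded uniformly in $T$ (estimate \eqref{key2}). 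Nothing in your plan anticipates this mechanism, and ``establishing recurrence of the expected dynamics on $\T^n\times\{1,2\}$'' is not what the paper does; the paper instead runs a Perron-type argument with the \emph{maximal} subsolution $(\phi_1,\phi_2)$ below $(\ul{u}_1,\ul{u}_2)$, which also handles the degenerate case $\ol{u}_1(x)=\ul{u}_1(x)$ via the Barron--Jensen characterization. So while your skeleton is reasonable, you are missing the two load-bearing pieces: the scaling result and the exponentially-weighted control of the coupling term.
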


In the last decade, the large time behavior of solutions of single Hamilton--Jacobi equations,
\begin{equation} \label{single-HJ}
u_t+H(x,Du)=0 \ \text{in}\ \Q, 
\end{equation}
where $H$ is coercive,
 has received much attention and general convergence results for solutions have been established. 
The first general result was discovered by
Namah and Roquejoffre in \cite{NR} under the following additional assumptions:
$p \mapsto H(x,p)$ is convex, and 
\begin{equation}\label{NR-cond}
H(x,p)\ge H(x,0) 
\ \textrm{for all} \ (x,p)\in\cM\times\R^{n} 
\ \textrm{and} \ 
\max_{\cM}H(x,0)=0,   
\end{equation}
where $\cM$ is a smooth compact $n$-dimensional 
manifold without boundary. 
Then Fathi 
used dynamical system approach from weak KAM theory
in \cite{F2} to establish
the same type of convergence result,
which requires
{uniform} convexity (and smoothness) assumptions on $H(x,\cdot)$, 
i.e., $D_{pp}H(x,p)\ge\al I$ for all $(x,p)\in\cM\times\R^{n}$ 
and $\al>0$ but does not require
the specific structure \eqref{NR-cond} of Hamiltonians. 
Afterwards  Roquejoffre \cite{R},  
Davini and Siconolfi in \cite{DS}, Ishii in \cite{I2008} refined 
and generalized the approach of Fathi and 
they studied the asymptotic problem for 
Hamilton--Jacobi equations 
on $\cM$ or the whole $n$-dimensional Euclidean space. 
Besides, Barles and Souganidis \cite{BS} 
also obtained this type of results, for possibly non-convex Hamiltonians,
by using a PDE method in the context of viscosity solutions.

In the previous paper \cite{MT1},
the authors could establish Theorem \ref{thm:main}
only in two main specific cases.
In the first case, we generalized the approach in \cite{NR} and obtain
convergence result under additional assumptions similar to \eqref{NR-cond}
(see also \cite{CLLN}).
The second case is a generalization of \cite{BS} under the strong assumption that
$H_1=H_2=H$, where $H$ satisfies similar assumptions as in \cite{BS}.
We could not obtain Theorem \ref{thm:main} in its full generality 
because of the appearance of the coupling terms $u_1-u_2$ and $u_2-u_1$.

In this paper we develop a dynamical approach to 
weakly coupled systems of Hamilton--Jacobi equations 
which is inspired by the works by Davini, Siconolfi \cite{DS} 
and Ishii \cite{I2008}, and establish 
Theorem \ref{thm:main} in its full generality. The results in \cite{F2, R, DS}
can be viewed as a particular case of Theorem \ref{thm:main} when
 $H_1=H_2$, and $g_1=g_2$.
As we consider  system (C),
we need to take random switchings
among the two states in \eqref{def-value}  into account, 
which does {never} appear in the context of 
single Hamilton-Jacobi equations. The key ingredients in this approach
consist of obtaining existence and stability results of extremal curves of \eqref{def-value}.
It is fairly straightforward to prove the existence of extremal curves by using techniques
from calculus of variations. However,
representation formulas \eqref{def-value} are implicit in some sense and
prevent us from deriving a stability result (see Theorem \ref{thm:stability}).
In order to over come this difficulty, we give  more deterministic formulas
for the value functions of \eqref{def-value} by explicit calculations
 in Theorem \ref{thm:representation}. By using the new formulas,
which are {more} intuitive, we are able to derive Theorem \ref{thm:stability}, and
hence large time behavior results.
We here just focus on the case where the coupling coefficients of (C) are constant for the sake of clarity.
It is straightforward to check that our approach works well for the general cases of variable coefficients,
i.e. $c_i \in C(\T^n,(0,\infty))$ for $i=1,2$.

Let us call attention to the forthcoming paper \cite{CGMT1} by Cagnetti, Gomes and the authors,
which provides a completely new and unified approach to the study of large time behaviors of 
both single and weakly coupled systems of Hamilton--Jacobi equations.
A new and different proof of Theorem \ref{thm:main} is derived as well.

After this paper was finished, we learnt that Nguyen \cite{VN} also achieved some similar results independently by using the PDE approach introduced by Barles and Souganidis \cite{BS}. 
We also refer to the interesting recent paper by
Davini and Zavidovique \cite{DZ} on the study of Aubry sets for weakly coupled systems.
\smallskip

The paper is organized as follows. In Section \ref{rep-form} 
we establish new representation formulas,
which are more explicit and useful for our study here. 
We then derive the existence of extremal curves in Section \ref{extrem-exist},
which is pretty standard in the theory of optimal control and calculus of variations.
Section \ref{extrem-stabi} concerns the study of stability of extremal curves.
This section plays the key roles in this paper and allows us to
overcome the technical difficulties coming from the coupling terms. 
See Remarks \ref{difficulty-1} and \ref{difficulty-2} for details. 
Section \ref{proof-main} is devoted to the proof of Theorem \ref{thm:main}. 
We derive generalization results for systems of $m$-equations for $m \ge 2$
in Remark \ref{generalization}.
Finally, some lemmata concerning verifications of optimal control formulas for (C) in Section \ref{rep-form} are recorded in Appendix for readers' convenience.
\smallskip


\section{Preliminaries} \label{rep-form}

In this section, we establish new representation formulas, which 
give us a clearer intuition about the switching states of the systems.
The new formulas allow us to perform deep studies 
on the extremal curves in Sections \ref{extrem-exist}, and \ref{extrem-stabi}.  
For every interval $I \subset \R$ and subset $S \subset \R^m$ for $m\in\N$, we denote by
$\AC(I,S)$ the set of all absolutely continuous functions $\gam:I \to S$. We write
$\AC(I)$ to denote $\AC(I,\T^n)$ for simplicity.

\begin{lem}\label{lem:half1}
Let $\nu$ be a Markov process defined by \eqref{markov}
with $c_1=c_2=1$ and $\nu(0)=i$ for $i\in\{1,2\}$ and 
set $p_{j}(t):=\bP(\nu(t)=j)$ for $j\in\{1,2\}$. 
Then we have 
\[
p_j(t)=1/2+e^{2t}(p_j(0)-1/2) \ \text{for all} \ t<0. 
\]
In particular, $p_{j}(t)\to 1/2$ as $t\to-\infty$ for any $j\in\{1,2\}$. 
\end{lem}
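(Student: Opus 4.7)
The plan is to derive a linear ordinary differential equation for $p_j(\cdot)$ from the infinitesimal description of $\nu$ in \eqref{markov}, solve it explicitly, and then take the limit as $t\to-\infty$.

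First I would use the total probability formula together with \eqref{markov} to write, for $t \le 0$ and small $\Delta t > 0$,
\[
p_1(t-\Delta t)=\bP\bigl(\nu(t-\Delta t)=1\mid \nu(t)=1\bigr)\,p_1(t)+\bP\bigl(\nu(t-\Delta t)=1\mid \nu(t)=2\bigr)\,p_2(t),
\]
and similarly for $p_2$. Since $c_1=c_2=1$, the off-diagonal transition probabilities are each $\Delta t+o(\Delta t)$ and the diagonal ones $1-\Delta t+o(\Delta t)$. Rearranging and dividing by $-\Delta t$ (watching the reversed time convention carefully) and letting $\Delta t \to 0^+$ yields the backward Kolmogorov system
\[
p_1'(t)=p_1(t)-p_2(t),\qquad p_2'(t)=p_2(t)-p_1(t).
\]

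Next I would use the obvious constraint $p_1(t)+p_2(t)=1$ to reduce to a scalar equation. Setting $q(t):=p_j(t)-\tfrac12$ gives $q'(t)=2q(t)$, so $q(t)=q(0)e^{2t}$, i.e.
\[
p_j(t)=\tfrac12+e^{2t}\bigl(p_j(0)-\tfrac12\bigr),
\]
which is the claimed formula. Finally, because $e^{2t}\to 0$ as $t\to-\infty$, one immediately obtains $p_j(t)\to 1/2$.

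There is no real obstacle here; the only point requiring some care is book-keeping the sign conventions induced by \eqref{markov} being stated for $\nu(s-\Delta s)$ rather than $\nu(s+\Delta s)$, so that the ODE comes out with the correct sign and the solution decays (rather than blows up) as $t\to-\infty$. Everything else is a routine application of the Chapman--Kolmogorov relation and elementary ODE theory.
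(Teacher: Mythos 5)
Your proposal is correct and follows essentially the same route as the paper: total probability plus the infinitesimal description \eqref{markov} to obtain the backward ODE (the paper writes it directly as $\dot p_j = 2p_j-1$ using $p_i+p_j=1$, which is equivalent to your $2\times 2$ system reduced by the same constraint), followed by the elementary substitution $q=p_j-\tfrac12$. The only cosmetic difference is that you display the system before reducing; your attention to the sign coming from the reversed-time difference quotient matches the care needed in the paper's step from $\frac{p_j(t-s)-p_j(t)}{s}\to 1-2p_j$ to $\dot p_j=2p_j-1$.
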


\begin{proof}
By the definition of \eqref{markov} we have for $t<0$ and $s>0$ small enough
\begin{align*}
&p_{j}(t-s)\\
=&\,
\bP(\nu(t-s)=j\mid \nu(t)=i)\bP(\nu(t)=i)
+
\bP(\nu(t-s)=j\mid \nu(t)=j)\bP(\nu(t)=j)\\
=&\,
(s+o(s))(1-p_j(t))+(1-s-o(s))p_j(t). 
\end{align*}
Therefore, 
\[\frac{p_j(t-s)-p_j(t)}{s}=(1+\frac{o(s)}{s})(1-2p_j).\]
Sending $s\to0$ yields $\dot{p}_j=2p_j-1$, which 
implies the conclusion, i.e.,   
$p_j(t)=1/2+e^{2t}(p_j(0)-1/2)$ for all $t<0$.  
\end{proof}

A straightforward result of Lemma \ref{lem:half1} is 
\begin{lem}\label{lem:half2}
Let $\phi_i$ be any functions in $C(\T^n)$ for $i=1,2$. 
We have 
\[
\E_{i}[\phi_{\nu(t)}(x)]
=\frac{1}{2}(1+e^{2t})\phi_i(x)
+\frac{1}{2}(1-e^{2t})\phi_j(x)
\]
for all $x\in\T^n$,
$t<0$, and $i=1,2$, where we take $j$ so that 
$\{i,j\}=\{1,2\}$. 
\end{lem}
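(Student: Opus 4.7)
The plan is to reduce the computation directly to Lemma~\ref{lem:half1}. Since the process $\nu$ takes values in the finite set $\{1,2\}$, the random variable $\phi_{\nu(t)}(x)$ takes only the two values $\phi_1(x)$ and $\phi_2(x)$, so the expectation splits as
\[
\E_{i}[\phi_{\nu(t)}(x)] = \bP(\nu(t)=i)\,\phi_i(x) + \bP(\nu(t)=j)\,\phi_j(x),
\]
where the probabilities are those of the Markov chain started at $\nu(0)=i$.

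The next step is to invoke Lemma~\ref{lem:half1} with the appropriate initial data. Because we condition on $\nu(0)=i$, we have $p_i(0)=1$ and $p_j(0)=0$. Substituting into the formula $p_k(t)=1/2+e^{2t}(p_k(0)-1/2)$ from Lemma~\ref{lem:half1} (valid for $t<0$) gives
\[
p_i(t)=\tfrac{1}{2}(1+e^{2t}),\qquad p_j(t)=\tfrac{1}{2}(1-e^{2t}).
\]
Plugging these back into the decomposition above yields exactly the claimed identity.

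There is no real obstacle here; the only subtle point is being careful about which state the chain is started in, so that one applies Lemma~\ref{lem:half1} with the correct initial data $p_i(0)=1$ and $p_j(0)=0$ rather than with general initial probabilities. Everything else is a one-line substitution.
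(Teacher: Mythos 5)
Your proof is correct and is precisely the intended argument: the paper labels Lemma~\ref{lem:half2} a ``straightforward result'' of Lemma~\ref{lem:half1} and gives no further details, and your two steps — splitting the expectation of the two-valued random variable $\phi_{\nu(t)}(x)$ over the events $\{\nu(t)=i\}$, $\{\nu(t)=j\}$, then evaluating $p_i(t)$, $p_j(t)$ via Lemma~\ref{lem:half1} with the initial data $p_i(0)=1$, $p_j(0)=0$ — fill in exactly that gap.
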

\begin{rem}
In general 
if $c_1, c_2>0$ are arbitrary constants, then we have
\[
\E_{i}[\phi_{\nu(t)}(x)]
=\frac{1}{c_1+c_2}(c_j+c_ie^{(c_1+c_2)t})\phi_i(x)
+\frac{c_i}{c_1+c_2}(1-e^{(c_1+c_2)t})\phi_j(x)
\]
for all $x\in\T^n$,
$t<0$, and $i=1,2$, where we take $j$ so that 
$\{i,j\}=\{1,2\}$. 
\end{rem}

It turns out that the value function of optimal control problems \eqref{def-value}   can be written in more explicit forms without using continuous Markov chains as follows by using the Fubini theorem.
\begin{thm}\label{thm:representation}
Let $u_i$ be the value functions defined by \eqref{def-value}. 
Then we can write them as 
\begin{multline}\label{C-rep}
u_i(x,t)=\inf \Big \{
\int_{-t}^{0} \dfrac{1}{2}(1+e^{2s}) L_i(\gam(s),\dot \gam(s))\,ds 
+ \dfrac{1}{2}(1+e^{-2t})g_{i}(\gam(-t))\\
+
\int_{-t}^{0} \dfrac{1}{2}(1-e^{2s}) L_j(\gam(s),\dot \gam(s))\,ds 
+ \dfrac{1}{2}(1-e^{-2t})g_{j}(\gam(-t))\
\mid \ \gam \in \AC([-t,0]),\ \gam(0)=x
\Big \}.
\end{multline}
Moreover, $u_i$ are uniformly continuous on $\cQ$ and 
the pair $(u_1,u_2)$ is the unique viscosity solution 
of {\rm (C)}. 
\end{thm}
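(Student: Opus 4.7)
The plan is to derive the formula \eqref{C-rep} by a direct Fubini computation on \eqref{def-value}, then deduce continuity from coercivity of the Hamiltonians, and finally invoke the dynamic programming principle and a known comparison principle to identify $(u_1,u_2)$ as the unique viscosity solution of (C).

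First, I would fix an admissible curve $\gam\in\AC([-t,0])$ with $\gam(0)=x$. Since $L_i$ are continuous and bounded below on any bounded set (as Legendre transforms of coercive $H_i$), the integrand $L_{\nu(s)}(\gam(s),\dot\gam(s))$ is jointly measurable in $(\omega,s)$ and the Fubini--Tonelli theorem applies to exchange $\E_i$ and $\int_{-t}^0 \,ds$. This gives
\[
\E_i\!\left[\int_{-t}^0 L_{\nu(s)}(\gam(s),\dot\gam(s))\,ds\right]
=\int_{-t}^0 \E_i\bigl[L_{\nu(s)}(\gam(s),\dot\gam(s))\bigr]\,ds.
\]
For each fixed $s<0$, applying Lemma \ref{lem:half2} with $\phi_k(\cdot)=L_k(\gam(s),\dot\gam(s))$ (treating the arguments as deterministic functions of $s$, independent of the Markov chain) yields
\[
\E_i\bigl[L_{\nu(s)}(\gam(s),\dot\gam(s))\bigr]=\tfrac{1}{2}(1+e^{2s})L_i(\gam(s),\dot\gam(s))+\tfrac{1}{2}(1-e^{2s})L_j(\gam(s),\dot\gam(s)).
\]
A second application of Lemma \ref{lem:half2} to the terminal cost, with $\phi_k=g_k$, handles the boundary term $\E_i[g_{\nu(-t)}(\gam(-t))]$, and summing everything and taking the infimum over $\gam$ produces exactly \eqref{C-rep}.

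Next, I would establish uniform continuity on $\cQ$. The coercivity (A1) makes $L_i$ superlinear, so the standard calculus-of-variations argument shows: optimal (or near-optimal) curves have velocities controlled in $L^\infty$ uniformly in $(x,t)$ over bounded $t$-intervals, and the kernels $\tfrac12(1\pm e^{2s})$, $\tfrac12(1\pm e^{-2t})$ are bounded in $[0,1]$. From this one obtains a Lipschitz-in-$x$ estimate for $u_i$ uniformly in $t$, and continuity in $t$ by perturbing the admissible class by constant-in-time extensions. The convex combinations with the exponentials present no extra difficulty.

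The main task, and the expected obstacle, is verifying that $(u_1,u_2)$ solves (C) in the viscosity sense. My plan is to prove a dynamic programming principle (DPP) directly from \eqref{C-rep} and from \eqref{def-value}: for $0<h<t$,
\[
u_i(x,t)=\inf_{\gam}\E_i\!\left[\int_{-h}^0 L_{\nu(s)}(\gam,\dot\gam)\,ds+u_{\nu(-h)}(\gam(-h),t-h)\right],
\]
using the Markov property of $\nu$ and a careful splitting of the path integral at $s=-h$. This is where the coupling enters nontrivially, since the chain may switch in $[-h,0]$. I would control the contributions of multi-switch events by $o(h)$ using \eqref{markov}, so that to leading order in $h$ the value function at time $t-h$ is evaluated with weights $1-h$ (no switch) and $h$ (one switch). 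Plugging a smooth test function and passing $h\to 0$ then yields the two equations of (C) with coupling terms $c_i(u_i-u_j)=u_i-u_j$ (recall $c_1=c_2=1$). The computational lemmas promised in the Appendix presumably carry out precisely this verification, and I would cite them. Finally, uniqueness follows from the standard comparison principle for quasi-monotone weakly coupled systems \cite{EL,IK}.
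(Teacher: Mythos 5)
Your Fubini step and application of Lemma \ref{lem:half2} to both the running and terminal costs match the paper's derivation of \eqref{C-rep} exactly, and your sketch of uniform continuity (velocity bounds from coercivity, plus barrier/approximation arguments near $t=0$) is compatible with what the paper does in Proposition \ref{prop:uni-conti}, though the paper is more careful there, using a Lipschitz lower barrier $g_i(x)-M_1t$ built from a subsolution comparison before reducing the general case by approximation. The genuine divergence is in how the viscosity property is verified. You propose a probabilistic dynamic programming principle in the $\E_i$ form, splitting at $s=-h$, using the Markov property of $\nu$, and then expanding the switching probabilities to order $h+o(h)$ via \eqref{markov} to extract the coupling term when passing a test function and sending $h\to 0$. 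This is a correct route (and is essentially the one taken in the authors' earlier paper \cite{MT2}, as the proof itself remarks). The paper, however, deliberately avoids revisiting the stochastic argument: it proves the DPP (Proposition \ref{prop:DPP}) entirely from the explicit representation \eqref{C-rep}, relying on the algebraic identity $\tfrac12(1\pm e^{2(s-h)})=\tfrac12(1+e^{-2h})\cdot\tfrac12(1\pm e^{2s})+\tfrac12(1-e^{-2h})\cdot\tfrac12(1\mp e^{2s})$ for the weights, and then reads off the coupling term $u_1-u_2$ from the exact coefficient $\tfrac{1-e^{-2h}}{2h}\to 1$. The payoff of the paper's choice is a fully self-contained, purely analytic verification with no $o(h)$ estimates on multi-switch events; the payoff of your choice is that it stays closer to the probabilistic intuition and would work without having first derived the closed-form weights. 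Both lead to the same conclusion, and both correctly finish by invoking the comparison principle of \cite{EL,IK} for uniqueness.
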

We call $(1/2)(1+e^{2s})$ and $(1/2)(1-e^{2s})$ for $s<0$ 
{\it the weights} corresponding to {\rm (C)}, which comes from 
the random switchings among the two states in \eqref{def-value}.

\begin{proof}
By Fubini's theorem and Lemma \ref{lem:half2} 
we have  
\begin{align*}
&\E_{i}\Big[\int_{-t}^{0}
L_{\nu(s)}(\gam(s),\dot{\gam}(s))\,ds
+g_{\nu(-t)}(\gam(-t))\Big]\\
=&\, 
\int_{-t}^{0}
\E_{i}\Big[L_{\nu(s)}(\gam(s),\dot{\gam}(s))\Big]
\,ds
+\E_{i}\big[g_{\nu(-t)}(\gam(-t))\big]\\
=&\, 
\int_{-t}^{0} \dfrac{1}{2}(1+e^{2s}) L_i(\gam(s),\dot \gam(s))\,ds 
+ \dfrac{1}{2}(1+e^{-2t})g_{i}(\gam(-t))\\
&+
\int_{-t}^{0} \dfrac{1}{2}(1-e^{2s}) L_j(\gam(s),\dot \gam(s))\,ds 
+ \dfrac{1}{2}(1-e^{-2t})g_{j}(\gam(-t))
\end{align*}
for any $\gam\in\AC([-t,0])$, 
which implies the equality \eqref{C-rep}.

In Appendix we prove that $u_i$   
are uniformly continuous on $\cQ$ and the pair $(u_1,u_2)$ gives a solution 
of (C). 
In the previous paper \cite{MT2}, we showed that the pair $(u_1,u_2)$ defined by \eqref{def-value}
solves (C) already. But we present it in a different way by using the new formula \eqref{C-rep} itself
to make the paper self-contained.
\end{proof}

Let $(v_1,v_2,0)$ be a solution of (E).  
Without loss of generality,
we may assume that the ergodic constant $c=0$ henceforth.
We notice that $v_i$ satisfies 
\begin{multline}\label{E-value}
v_i(x)=\inf  \Big \{ \E_i \Big [
\int_{-t}^0 L_{\nu(s)}(\gam(s), \dot \gam(s))\,ds +v_{\nu(-t)}(\gam(-t)) \Big ] \mid\\
\ \gam\in\AC((-\infty,0]) \ \text{with} \ \gam(0)=x \Big \},
\end{multline}
where $\nu$  is a $\{1,2\}$-valued process which 
is a continuous-time Markov chain
satisfying \eqref{markov} such that $\nu(0)=i$.

\begin{prop}\label{prop:sub2}
Let $(v_1,v_2,0)$ be a subsolution of {\rm (E)}. Then, 
\[
v_i(x)\le 
\E_i\Big[
\int_{-t}^{0}
L_{\nu(s)}(\gam(s),\dot{\gam}(s))\,ds
+v_{\nu(-t)}(\gam(-t))\Big]
\]
for all $t\ge0$, $\gam\in\AC([-t,0])$ with $\gam(0)=x$. 
\end{prop}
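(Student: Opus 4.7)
The plan is to reduce the claim to a deterministic inequality along $\gam$ and exploit the Legendre duality between $H_k$ and $L_k$. By Lemma~\ref{lem:half2} applied at time $-t$ with $\phi_k = v_k$, and by Fubini combined with the same lemma applied pointwise in $s$ with $\phi_k(\cdot) = L_k(\cdot,\dot\gam(s))$, the right-hand side of the claimed inequality equals
\[
\int_{-t}^{0} \left[\frac{1+e^{2s}}{2} L_i(\gam(s),\dot\gam(s)) + \frac{1-e^{2s}}{2} L_j(\gam(s),\dot\gam(s))\right] ds + \frac{1+e^{-2t}}{2} v_i(\gam(-t)) + \frac{1-e^{-2t}}{2} v_j(\gam(-t)),
\]
where $\{i,j\}=\{1,2\}$. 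It therefore suffices to dominate $v_i(x)$ by this deterministic expression.

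To do so, I would introduce the interpolation
\[
f(s) := \frac{1+e^{2s}}{2} v_i(\gam(s)) + \frac{1-e^{2s}}{2} v_j(\gam(s)), \qquad s\in[-t,0],
\]
which satisfies $f(0) = v_i(x)$ and whose value at $s=-t$ is exactly the boundary contribution above, so the desired bound amounts to $f(0) - f(-t) \le \int_{-t}^0 \left[\tfrac{1+e^{2s}}{2}L_i + \tfrac{1-e^{2s}}{2}L_j\right] ds$. Pretending momentarily that $v_1,v_2$ are smooth,
\[
f'(s) = e^{2s}\bigl(v_i(\gam(s)) - v_j(\gam(s))\bigr) + \frac{1+e^{2s}}{2} Dv_i(\gam(s))\cdot\dot\gam(s) + \frac{1-e^{2s}}{2} Dv_j(\gam(s))\cdot\dot\gam(s).
\]
The subsolution inequalities $H_k(x,Dv_k) + v_k - v_l \le 0$ for $\{k,l\}=\{1,2\}$ (recall we reduced to $c=0$) combined with the Fenchel--Legendre estimate $Dv_k\cdot q \le L_k(x,q) + H_k(x,Dv_k)$ yield $Dv_k(\gam)\cdot\dot\gam \le L_k(\gam,\dot\gam) + v_l(\gam) - v_k(\gam)$. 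Substituting these bounds, the contributions involving $v_i - v_j$ cancel algebraically (the coefficient $e^{2s} - \tfrac{1+e^{2s}}{2} + \tfrac{1-e^{2s}}{2}$ is identically zero, which is the structural reason the argument works), leaving
\[
f'(s) \le \frac{1+e^{2s}}{2} L_i(\gam,\dot\gam) + \frac{1-e^{2s}}{2} L_j(\gam,\dot\gam),
\]
and integrating from $-t$ to $0$ delivers the claim.

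The hard part is the regularity of $v_1,v_2$: as viscosity subsolutions they are in general not differentiable, so neither $f'$ nor the chain rule for $v_k\circ\gam$ is classical. Under (A1), standard arguments for the ergodic problem (E) show that $v_k$ is Lipschitz on $\T^n$, and together with the convexity (A2) of $H_k$ this ensures the subsolution inequality holds almost everywhere in the classical sense. To carry out the calculation rigorously along the curve, I would regularize $(v_1,v_2)$ by a simultaneous sup-convolution to obtain semiconcave approximants $(v_1^\ep, v_2^\ep)$ that are pointwise subsolutions of a perturbed system with an $o(1)$ error as $\ep\to 0$; the computation above applies classically to the $v_k^\ep$, and passing $\ep\to 0$ using uniform Lipschitz bounds yields the conclusion for $v_k$.
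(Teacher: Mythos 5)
Your proof is correct, and it reaches the same inequality as the paper by a genuinely different (more elementary) route. The paper proves Proposition~\ref{prop:sub2} through Lemma~\ref{lem:composite}, an It\^o-type formula for $v_i\circ\gam$ along the jump process $\nu$: one mollifies $v_i$, applies the It\^o formula for jump processes to the mollified functions, and passes to the limit via Mazur's theorem and the Clarke differential; the subsolution inequality and Fenchel--Young are then applied \emph{inside} the expectation, and the coupling term $v_{\nu(s)}-v_{3-\nu(s)}$ generated by the jump part of the It\^o formula is cancelled against the coupling term in the subsolution inequality, pointwise in the random parameter. You instead unpack the expectation first (via Lemma~\ref{lem:half2}), introduce the deterministic interpolation $f(s)=\tfrac{1}{2}(1+e^{2s})v_i(\gam(s))+\tfrac{1}{2}(1-e^{2s})v_j(\gam(s))$, and differentiate; the coefficient $e^{2s}-\tfrac{1}{2}(1+e^{2s})+\tfrac{1}{2}(1-e^{2s})\equiv 0$ that makes your argument close is precisely the post-expectation form of that same cancellation. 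Your version is arguably cleaner because it replaces the probabilistic It\^o lemma by a one-variable fundamental theorem of calculus; the price is that one must still justify the chain rule along $\gam$ for a merely Lipschitz $(v_1,v_2)$. Your proposed sup-convolution regularization does work here, but there is one point you should not gloss over: for the system, sup-convolving each $v_k$ moves the subsolution inequality to a \emph{shifted} base point $y_k^\ep$ that may differ between $k=1$ and $k=2$, so the coupling term becomes $v_k(y_k^\ep)-v_l(y_k^\ep)$ rather than $v_k^\ep(x)-v_l^\ep(x)$; one needs the estimate $|x-y_k^\ep|=O(\ep)$ (valid since $v_k$ is Lipschitz by coercivity (A1)) to control the resulting error uniformly. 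The paper sidesteps this by staying with mollification and the Clarke differential, which is why it does not need a sup-convolution error analysis.
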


\begin{lem}\label{lem:composite} 
Let $t>0$, $v_i\in\W(\T^n)$ for $i=1,2$ and  
$\gam\in\AC([-t,0],\T^n)$ with $\gam(0)=x$.   
We have $v_i\circ \gam\in\AC([-t,0],\R)$ and 
there exists a function $p_i\in \Li((-t,0),\R^n)$ such that 
\begin{align}
&
v_i(x)=
\E_i\Big[
\int_{-t}^{0}
p_{\nu(s)}(s) \cdot\dot{\gam}(s)+
\sum_{j=1}^{2}(v_{\nu(s)}-v_j)(\gam(s))\,ds
+v_{\nu(-t)}(\gam(-t))\Big], \label{ineq:dominated}\\
&
p_i(s)\in\pl_cv_i(\gam(s))  \nonumber
\end{align}
for a.e. $s\in(-t,0)$ and $i\in\{1,2\}$. 
Here $\pl_cv_i$ denotes the Clarke differential 
of $v_i$ which is defined as
\[
\pl_cv_i(x)=\bigcap_{r>0}\,\ol{\rm co}\,
\{Dv_i(y)\mid y\in B(x,r),\ v_i \textrm{ is differentiable at }y\}
\  \textrm{ for }x\in\T^n,
\]
where $B(x,r):=\{y\in\R^n\mid |x-y|<r\}$,
and for $A\subset \R^n$, $\ol{\rm co}\,A$ denotes the closed convex hull of $A$.
\end{lem}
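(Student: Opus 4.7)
The plan is to verify the identity \eqref{ineq:dominated} by computing its right-hand side using Fubini's theorem together with Lemma \ref{lem:half2}, and matching the resulting deterministic expression with the fundamental theorem of calculus applied to a suitable convex combination of $v_i\circ\gam$ and $v_{j'}\circ\gam$, where $j'\in\{1,2\}$ denotes the index different from $i$.

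First, since $v_k\in\W(\T^n)$ is Lipschitz on $\T^n$ and $\gam\in\AC([-t,0])$, the composition $v_k\circ\gam$ belongs to $\AC([-t,0],\R)$. Clarke's chain rule for Lipschitz functions then furnishes, for each $k\in\{1,2\}$, a measurable selection $p_k\colon(-t,0)\to\R^n$ with $p_k(s)\in\pl_cv_k(\gam(s))$ and
\[
\frac{d}{ds}v_k(\gam(s))=p_k(s)\cdot\dot\gam(s)\quad\text{for a.e. }s\in(-t,0).
\]
Since the set-valued map $s\mapsto\pl_cv_k(\gam(s))$ is upper semicontinuous (as $\gam$ is continuous) and takes nonempty, compact, convex values uniformly bounded by $\|v_k\|_{\W(\T^n)}$, the selection $p_k$ exists and lies in $\Li((-t,0),\R^n)$.

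Next, introduce the weights $\alpha(s):=\tfrac12(1+e^{2s})$ and $\beta(s):=\tfrac12(1-e^{2s})$ from Lemma \ref{lem:half2}, and set $f(s):=\alpha(s)\,v_i(\gam(s))+\beta(s)\,v_{j'}(\gam(s))$. Then $f\in\AC([-t,0])$, and since $\alpha'(s)=-\beta'(s)=e^{2s}$, the product rule combined with the chain rule above yields, almost everywhere on $(-t,0)$,
\[
\dot f(s)=\alpha(s)\,p_i(s)\cdot\dot\gam(s)+\beta(s)\,p_{j'}(s)\cdot\dot\gam(s)+e^{2s}\bigl(v_i-v_{j'}\bigr)(\gam(s)).
\]
Integrating from $-t$ to $0$ and using $\alpha(0)=1$, $\beta(0)=0$, I obtain
\[
v_i(x)=\alpha(-t)v_i(\gam(-t))+\beta(-t)v_{j'}(\gam(-t))+\int_{-t}^0\!\bigl(\alpha p_i+\beta p_{j'}\bigr)\cdot\dot\gam\,ds+\int_{-t}^0\! e^{2s}(v_i-v_{j'})(\gam(s))\,ds.
\]

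To conclude, I reinterpret each term via Lemma \ref{lem:half2}: the boundary term equals $\E_i[v_{\nu(-t)}(\gam(-t))]$; the integrand $\alpha(s)p_i(s)\cdot\dot\gam(s)+\beta(s)p_{j'}(s)\cdot\dot\gam(s)$ equals $\E_i[p_{\nu(s)}(s)\cdot\dot\gam(s)]$; and since $\sum_{j=1}^2(v_k-v_j)$ equals $v_i-v_{j'}$ when $k=i$ and $-(v_i-v_{j'})$ when $k=j'$, one checks
\[
e^{2s}(v_i-v_{j'})(\gam(s))=(\alpha(s)-\beta(s))(v_i-v_{j'})(\gam(s))=\E_i\Bigl[\sum_{j=1}^2(v_{\nu(s)}-v_j)(\gam(s))\Bigr].
\]
A final application of Fubini swaps the expectation with the time integral and yields exactly \eqref{ineq:dominated}. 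The only nontrivial ingredient is the Clarke chain rule with a measurable $L^\infty$-selection from $\pl_cv_k$; the remainder is a direct product-rule computation together with the weight bookkeeping from Lemma \ref{lem:half2}.
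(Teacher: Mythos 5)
Your argument is correct and arrives at the same identity, but by a genuinely different route from the paper. The paper works at the probabilistic level throughout: it mollifies $v_i$ to get smooth approximations $\psi^k_i(s)=(\rho^k*v_i)(\gam(s))$ and $p^k_i(s)=D(\rho^k*v_i)(\gam(s))$, applies the It\^o formula for the jump process $\nu$ to the pair $(\psi^k,p^k)$, passes to the limit $k\to\infty$ by weak-$*$ compactness in $L^\infty$, and then uses Mazur's theorem together with the convex-hull structure of $D(\rho^k*v_i)$ to conclude $p_i(s)\in\pl_c v_i(\gam(s))$ a.e. You instead (i) invoke the nonsmooth chain rule for Lipschitz-$\circ$-AC compositions as a black box to produce the measurable selections $p_k$, and (ii) collapse the expectations on the right-hand side of \eqref{ineq:dominated} into a purely deterministic expression via the explicit weights $\al(s)=\frac12(1+e^{2s})$, $\beta(s)=\frac12(1-e^{2s})$ from Lemma~\ref{lem:half2}, after which the identity reduces to the product rule applied to $f(s)=\al(s)v_i(\gam(s))+\beta(s)v_{j'}(\gam(s))$ plus the fundamental theorem of calculus. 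Your deterministic unpacking amounts to a direct verification of the It\^o formula for this particular two-state chain, so it is self-contained on the probability side and is arguably more transparent; by contrast, the paper cites the It\^o formula but constructs the selection $p_i$ from scratch.

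Two points of care. First, ``Clarke's chain rule'' is not quite the precise name for the lemma you need: the classical Clarke chain rule estimates $\pl_c(v\circ\gam)$, while what you use is the stronger statement that there is a \emph{measurable} selection $p(s)\in\pl_c v(\gam(s))$ with $(v\circ\gam)'(s)=p(s)\cdot\dot\gam(s)$ a.e.\ — that statement is correct and known, but the standard proof is exactly the mollification plus Mazur argument that constitutes the technical core of the paper's proof, so you are treating as a citation what the paper proves in full. Your brief justification (upper semicontinuity, compact convex values) only yields a measurable selection from $\pl_c v(\gam(s))$, not one that realizes the a.e.\ derivative; the selection must be taken from the subset $\{p\in\pl_c v(\gam(s)):p\cdot\dot\gam(s)=(v\circ\gam)'(s)\}$, and the nonemptiness of that set a.e.\ is the actual content of the chain rule. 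Second, your sign bookkeeping for the coupling term (reducing $\E_i[\sum_j(v_{\nu(s)}-v_j)(\gam(s))]$ to $e^{2s}(v_i-v_{j'})(\gam(s))$ and matching it against $\al'v_i+\beta'v_{j'}$) is correct and in fact resolves what at first glance looks like a sign discrepancy in the displayed It\^o formula in the paper's proof; this is a small but genuine virtue of working the expectation out explicitly.
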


\begin{proof}
Fix any $i\in\{1,2\}$. 
Let $\rho\in C^{\infty}(\R^n)$ be a standard mollification kernel, 
i.e., $\rho\ge0$, $\supp\rho\subset B(0,1)$ and $\int_{\R^n}\rho(x)\,dx=1$. 
Set $\rho^{k}(x):=k^{n}\rho(kx)$
for $k \in \N$, and 
\begin{align*}
\psi^{k}(i,t)=\psi_i^k(t):=(\rho^{k}\ast v_i)(\gam(t)) \ \textrm{and} \ 
p^k(i,t)=p_i^k(t):=D(\rho^{k}\ast v_i)(\gam(t))  
\end{align*}
for all $t\in(0,T)$. 
By the {It\^o} formula for a jump process we have 
\begin{align*}
&
\E_{i}\Big[\psi^k(\nu(0),0)-\psi^k(\nu(-t),-t)\Big]\\
=&\, 
\E_{i}\Big[
\int_{-t}^{0}p^{k}(\nu(s),s)\cdot\dot{\gam}(s)\,ds
+\int_{-t}^{0}\sum_{j=1}^{2}
\big(\psi^{k}(j,s)-\psi^{k}(\nu(s),s)\big)\,ds
\Big]. 
\end{align*}

Note that $\psi_i^k\to v_i(\gam(\cdot))$ uniformly on $[0,t]$ as $k\to\infty$ 
and moreover passing to a subsequence if necessary, 
we may assume that for some 
$p_i\in\Li((0,T),\R^n)$, 
$p_i^{k} \rightharpoonup p_i$ 
weakly star in $L^\infty((-t,0))$ as $k\to\infty$,  
which implies \eqref{ineq:dominated}.

It remains to show that $p_i(s)\in\pl_cv_i(\gam(s))$ for a.e. $s\in(-t,0)$.  
Since $\{p_i^k\}_{k\in\N}$ is weakly convergent to $p_i$ in $L^2((-t,0),\R^n)$, 
by the Mazur theorem, there is a sequence $\{q_i^k\}_{k\in\N}\subset L^\infty((-t,0),\R^n)$ 
such that 
\begin{equation}\label{convex-hull}
q_i^k\to p_i \ \textrm{ strongly in }L^2((-t,0),\R^n)\textrm{ as }k\to\infty, \ 
q_i^k\in {\rm co}\,\{p_i^j\mid j\geq k\}
\end{equation}
for all $j\in\N$.
We may thus assume by its subsequence if necessary that 
\[
q_i^k(s)\to p_i(s) \  \hbox{ for } a.e.  \ s\in (-t,0)\ \textrm{ as } \ k\to\infty. 
\]

Now, noting that 
$D(\rho_k\ast v)(x)=\int_{y\in B(x,1/k)}\rho_k(x-y)Dv_i(y)\,dy$ 
for any $x\in\T^n$ and $k\in\N$, 
we find that 
\[
p_i^k(s)\in \ol{\co}\{Dv_i(y) \mid y\in B(\gam(s),1/k), 
\ v_i \ \textrm{is differentiable at} \ y\}
\]
for any {$s\in(-t,0)$}. 
Therefore, 
\[
q_i^k(s)\in \ol{\co}\{Dv_i(y) \mid y\in B(\gam(s),1/k), 
\ v_i \ \textrm{is differentiable at} \ y\}
\]
for any $s\in(-t,0)$. 
Since $q_i^k(s)\to p_i(s)$ for $a.e.$ $s\in(-t,0)$ as $k\to\infty$, 
we get 
\[
p_i(s)\in \bigcap_{r>0}\,\ol{\co}
\{Dv_i(y)\mid y\in B(\gam(s),r),\ v_i \textrm{ is differentiable at }y\}
=\pl_cv_i(\gam(s))
\]
for $a.e.$ $s\in(-t,0)$. 
\end{proof}

\begin{proof}[Proof of Proposition {\rm\ref{prop:sub2}}]
Let $\gam\in\AC([-t,0])$ with $\gam(0)=x$ and 
$p_i$ be the functions given by Lemma \ref{lem:composite}. 
In view of Lemma \ref{lem:composite} we have 
\begin{align}
v_i(x)&= 
\E_i\Big[
\int_{-t}^{0}
p_{\nu(s)}(s)\cdot\dot{\gam}(s)+
\sum_{j=1}^{2}(v_{\nu(s)}-v_j)(\gam(s))\,ds
+v_{\nu(-t)}(\gam(-t))\Big] \nonumber\\
&\le
\E_i\Big[
\int_{-t}^{0}
H_{\nu(s)}(\gam,p_{\nu(s)})
+L_{\nu(s)}(\gam,\dot{\gam})
+\sum_{j=1}^{2}(v_{\nu(s)}-v_j)(\gam)\,ds
+v_{\nu(-t)}(\gam(-t))\Big] \label{ineq-composite}
\\
&\le
\E_i\Big[
\int_{-t}^{0}
L_{\nu(s)}(\gam,\dot \gam)\,ds
+v_{\nu(-t)}(\gam(-t))\Big]. 
\qedhere 
\end{align}
\end{proof}


\section{Existence of Extremal Curves}\label{extrem-exist}
Let $(v_1,v_2,0)$ be a solution of (E). 
For any interval 
$[a,b] \subset (-\infty,0]$, we denote by $\cE([a,b],x,i,(v_1,v_2))$ the set of all curves
$\gam \in \AC([a,b])$, 
which will be called an \textit{extremal curve} on $[a,b]$ 
such that $\gam(b)=x$ and for any $[c,d] \subset [a,b]$,
$$
\E_i [ v_{\nu(d)}(\gam(d)) ]=\E_i \Big[
\int_{c}^d L_{\nu(s)}(\gam(s), \dot \gam(s))\,ds +v_{\nu(c)}(\gam(c)) \Big] 
$$
with a continuous-time Markov chain $\nu$ such that $\nu(0)=i$ 
and satisfies \eqref{markov}.

\begin{thm}\label{thm:extremal-c}
Let $(v_1,v_2,0)$ be a solution of {\rm (E)}. 
Then $\cE((-\infty,0],x,i,(v_1,v_2)) \ne \emptyset$.
\end{thm}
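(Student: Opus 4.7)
The plan is to construct an extremal curve on $(-\infty,0]$ as a diagonal limit of minimizers on finite intervals $[-n,0]$. First, for each $n \in \N$, I would use the deterministic representation (analogous to Theorem~\ref{thm:representation} but applied to the fixed-point identity \eqref{E-value} for $v_i$) to rewrite the stochastic cost as a weighted deterministic functional
\[
J_n^i(\gam) = \int_{-n}^0 \bigl[w_1^i(s) L_1(\gam,\dot\gam) + w_2^i(s) L_2(\gam,\dot\gam)\bigr]\,ds + w_1^i(-n) v_1(\gam(-n)) + w_2^i(-n) v_2(\gam(-n)),
\]
where $w_j^i(s)\in[0,1]$ are the explicit weights from Lemma~\ref{lem:half2}. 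By the coercivity and strict convexity of $L_i$ (consequences of (A1), (A2)), and continuity of $v_i$, the direct method of the calculus of variations produces a minimizer $\gam_n \in \AC([-n,0])$ with $\gam_n(0)=x$. The dynamic programming principle, which holds because $v_i$ itself realizes the infimum in \eqref{E-value}, implies that $\gam_n$ is an extremal curve on the whole of $[-n,0]$, i.e., it satisfies the defining identity on every $[c,d]\subset[-n,0]$.

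Next I would extract uniform compactness estimates on $\{\gam_n\}$. Since $v_1,v_2$ are Lipschitz on $\T^n$ (from the coercivity (A1) and the equation (E)), the minimal cost $v_i(x)$ is bounded uniformly in $n$, and hence for each fixed $k \in \N$ the quantity $\int_{-k}^0 L_j(\gam_n,\dot\gam_n)\,ds$ is bounded independently of $n \ge k$. Superlinearity of the $L_j$'s then yields equi-integrability of the velocities on $[-k,0]$, and hence equicontinuity of $\{\gam_n\}$ on $[-k,0]$. (Alternatively, one can invoke Lemma~\ref{lem:composite} together with the Lipschitz bound on $v_i$ to obtain an outright equi-Lipschitz estimate via the Fenchel equality $\dot\gam_n \in \partial_p H_{\nu(s)}(\gam_n, p_{\nu(s)})$ and coercivity of $H_i$.) Arzel\`a–Ascoli combined with a diagonal argument then provides a subsequence $\gam_{n_k}$ that converges uniformly on every compact subset of $(-\infty,0]$ to a curve $\gam \in \AC_{\loc}((-\infty,0])$ with $\gam(0)=x$, while $\dot\gam_{n_k} \rightharpoonup \dot\gam$ weakly on each compact interval.

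Finally, I would pass to the limit in the extremal identity. For any $[c,d]\subset(-\infty,0]$ and $n_k\ge|c|$, one has
\[
\E_i[v_{\nu(d)}(\gam_{n_k}(d))] = \E_i\Bigl[\int_c^d L_{\nu(s)}(\gam_{n_k},\dot\gam_{n_k})\,ds + v_{\nu(c)}(\gam_{n_k}(c))\Bigr].
\]
After converting expectations to the deterministic weighted form, $\liminf_k$ on the right is controlled from below by the corresponding weighted integral along $\gam$ (lower semicontinuity of convex integrands under uniform$+$weak convergence) plus the limit of the boundary term (continuity of $v_j$), while the left-hand side converges by continuity. This yields the inequality $\ge$ in the extremal identity for $\gam$. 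The reverse inequality $\le$ is exactly Proposition~\ref{prop:sub2} applied on $[c,d]$ with the curve $\gam$ started from $\gam(d)$, since $(v_1,v_2,0)$ is in particular a subsolution of (E). Together they give equality on every $[c,d]$, so $\gam \in \cE((-\infty,0],x,i,(v_1,v_2))$.

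The main obstacle I anticipate is Step~2, the uniform compactness of the $\gam_n$'s on compact subsets independent of $n$. The weights $w_j^i(s)$ degenerate to $0$ at $s=0$ in one component, so one cannot directly invoke uniform coercivity of the combined weighted Lagrangian; the bound must be extracted from the \emph{undegenerate} component (which is bounded below by $1/2$) together with superlinearity, and the Lipschitz regularity of $v_1,v_2$ must be exploited carefully to prevent the terminal term $w_1^i(-n)v_1(\gam(-n)) + w_2^i(-n)v_2(\gam(-n))$ from absorbing all of the cost as $n\to\infty$.
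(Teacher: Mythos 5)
Your construction is correct, but it follows a genuinely different route from the paper's. You build the curve on $(-\infty,0]$ as a diagonal limit of minimizers $\gamma_n$ on $[-n,0]$ (each $\gamma_n$ being an extremal curve on the whole of $[-n,0]$ by the argument of Lemma~\ref{extrem-lem2} applied to the full interval), using the extremal identity on $[-k,0]$ to get the uniform-in-$n$ bound $\E_i\bigl[\int_{-k}^0 L_{\nu(s)}(\gamma_n,\dot\gamma_n)\,ds\bigr] = v_i(x) - \E_i[v_{\nu(-k)}(\gamma_n(-k))] \le C$, then Arzel\`a--Ascoli plus weak $L^2$ compactness on each compact interval, lower semicontinuity of the weighted convex integrand for one inequality, and Proposition~\ref{prop:sub2} for the reverse. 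The paper instead takes $\gamma^1 \in \cE([-1,0],x,i,(v_1,v_2))$ from Lemma~\ref{extrem-lem2} and then recursively concatenates: $\gamma^{k}\in\cE([-k,-k+1],\gamma^{k-1}(-k+1),i,(v_1,v_2))$, gluing the pieces into a single $\gamma\in\AC((-\infty,0])$. The paper's route is shorter in that it avoids the diagonal extraction entirely, but it leans (without comment) on two things your approach bypasses: that the existence argument of Lemma~\ref{extrem-lem2} carries over to the shifted intervals $[-k,-k+1]$ with the chain still pinned at $\nu(0)=i$, and that the extremal identity survives across the interfaces $-1,-2,\dots$ --- both ultimately consequences of the memoryless (Markov) property, equivalently the DPP of Proposition~\ref{prop:DPP}. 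Your version makes the limiting estimates fully explicit and correctly identifies --- and neutralizes --- the only delicate point, the degeneracy of one weight near $s=0$, by observing that the other weight stays $\ge 1/2$ while the superlinear Lagrangians are bounded below, so the weighted integrand remains uniformly superlinear. One small remark: the inequality ``$\le$'' on a sub-interval $[c,d]\subset(-\infty,0)$ does not literally follow from Proposition~\ref{prop:sub2} as stated (which is phrased for $[-t,0]$ with $\nu(0)=i$); you want the version conditioned on $\nu$ at time $d$, which is what the paper invokes implicitly in the proof of Lemma~\ref{extrem-lem2} and again relies on the Markov property. Spelling that out would make the argument airtight.
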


In order to avoid technical difficulties we make 
the following additional assumptions in this section
which are not necessary to get Theorem \ref{thm:extremal-c} and
Theorem \ref{thm:main}. 
We refer the readers to \cite[Section 6]{I2008} for the detail of general 
settings.

\begin{itemize}
\item[(A3)] $H_i \in C^2(\T^n \times \R^n)$ and there exists $\theta>0$ such that $D^2_{pp}H_i \ge \theta I$ 
for $i=1,2$, where $I$ is the unit matrix of size $n$.

\item[(A4)] There exists a constant $C>0$ such that 
$$
\dfrac{1}{2C}|p|^2-C \le H_i(x,p) \le \dfrac{C}{2}(|p|^2+1) \ 
\text{for} \ x \in \T^n,\ p \in \R^n, \ i=1,2.
$$
\end{itemize}
Note that in this case we can easily see that 
$L_i\in C^{2}(\T^n\times\R^n)$ are uniformly convex and satisfy
\begin{equation}\label{Lagran}
 \dfrac{1}{2C}|q|^2-C \le L_i(x,q) \le \dfrac{C}{2}(|q|^2+1) \ 
\text{for} \ x \in \T^n,\ q \in \R^n, \ i=1,2.
\end{equation}

\begin{lem} \label{extrem-lem2}
Let $(v_1,v_2,0)$ be a solution of {\rm (E)}. 
Then $\cE([-1,0],x,i,(v_1,v_2)) \ne \emptyset$.
\end{lem}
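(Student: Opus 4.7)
The strategy is to combine the direct method of the calculus of variations, applied to the deterministic representation of Theorem \ref{thm:representation}, with the dynamic programming inequality contained in Proposition \ref{prop:sub2}. By Lemma \ref{lem:half2} and Fubini's theorem, the representation \eqref{E-value} specialized to $t=1$ rewrites as $v_i(x)=\inf\Phi_i(\gam)$, where the infimum runs over $\gam\in\AC([-1,0])$ with $\gam(0)=x$ and
\begin{align*}
\Phi_i(\gam):=\int_{-1}^{0}\Big[\tfrac{1+e^{2s}}{2}L_i(\gam,\dot\gam)+\tfrac{1-e^{2s}}{2}L_j(\gam,\dot\gam)\Big]ds+\tfrac{1+e^{-2}}{2}v_i(\gam(-1))+\tfrac{1-e^{-2}}{2}v_j(\gam(-1)).
\end{align*}

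\textbf{Step 1 (existence of a minimizer).} By (A3)--(A4) and \eqref{Lagran}, each $L_i$ is strictly convex in $q$ with quadratic growth; $v_1,v_2$ are bounded and continuous on $\T^n$; and the weights $\tfrac{1\pm e^{2s}}{2}$ are nonnegative with sum $1$, so the integrand of $\Phi_i$ dominates $\tfrac{|\dot\gam|^2}{2C}-C$. Consequently $\Phi_i$ is coercive and weakly lower semicontinuous (by Tonelli's theorem) on the affine $H^1$-space of curves with $\gam(0)=x$, and the direct method produces a minimizer $\gam^\star\in\AC([-1,0])$ with $\gam^\star(0)=x$ and $\Phi_i(\gam^\star)=v_i(x)$.

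\textbf{Step 2 (sub-interval dynamic programming inequality).} I would next upgrade Proposition \ref{prop:sub2} to the assertion that, for every $-1\le c\le d\le 0$ and every $\gam\in\AC([c,d])$,
\begin{equation*}
\E_i\big[v_{\nu(d)}(\gam(d))\big]\le \E_i\Big[\int_c^d L_{\nu(s)}(\gam(s),\dot\gam(s))\,ds+v_{\nu(c)}(\gam(c))\Big].
\end{equation*}
Conditioning on $\nu(d)=i'$ and invoking the Markov property together with time-homogeneity, the shifted process $\{\nu(s+d):s\in[c-d,0]\}$ is a copy of the chain started at state $i'$ at time $0$; Proposition \ref{prop:sub2} applied to the shifted curve on a horizon of length $d-c$ then gives $v_{i'}(\gam(d))\le \E[\,\cdots\mid \nu(d)=i']$, and averaging over $\nu(d)$ yields the claim.

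\textbf{Step 3 (extremality on every subinterval).} Fix an arbitrary $-1\le c\le d\le 0$ and apply the inequality of Step 2 to the curve $\gam^\star$ on the three intervals $[-1,c]$, $[c,d]$ and $[d,0]$. Summing the three resulting inequalities reproduces exactly the global bound $v_i(x)\le \Phi_i(\gam^\star)$, which by the choice of $\gam^\star$ is actually an equality. Each of the three summands must therefore itself be an equality; the middle one is precisely the relation defining extremality on $[c,d]$, so $\gam^\star\in\cE([-1,0],x,i,(v_1,v_2))$.

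\textbf{The main obstacle.} Step 1 is standard once the deterministic formula of Theorem \ref{thm:representation} is in hand, and Step 3 is an ``all-or-nothing'' bookkeeping argument. The technical heart lies in Step 2, in rigorously transporting the time-$0$ subsolution inequality of Proposition \ref{prop:sub2} to an inequality on a later time-slice $[c,d]$, since the formalism of \eqref{E-value} is natively written for horizons ending at time $0$ with $\nu(0)=i$ fixed.
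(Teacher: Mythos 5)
Your proof is correct and follows essentially the same route as the paper: obtain a minimizer of the (deterministically rewritten) functional on $[-1,0]$ by the direct method and Tonelli lower semicontinuity, then use the subsolution inequality of Proposition \ref{prop:sub2} on the three subintervals $[-1,c]$, $[c,d]$, $[d,0]$ to force equality on each piece. The only cosmetic differences are that you explicitly spell out the Markov/time-homogeneity argument needed to push Proposition \ref{prop:sub2} from intervals anchored at $t=0$ to arbitrary subintervals $[c,d]$ (the paper uses this implicitly), and you phrase Step 1 directly as the direct method for an $H^1$ minimizer rather than extracting a limit from an explicit minimizing sequence; both are the same mechanism.
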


\begin{proof}
By \eqref{E-value} there exists a sequence of curves
$\{\gam_k\} \subset \AC([-1,0])$ with $\gam_k(0)=x$ such that
$$
v_i(x)+\dfrac{1}{k} >   \E_i \Big [
\int_{-1}^0 L_{\nu(s)}(\gam_k(s), \dot \gam_k(s))\,ds +v_{\nu(-1)}(\gam_k(-1)) \Big ].
$$
Since $v_i$ are bounded, we have 
\begin{equation}\label{bd1}
\E_i \Big [
\int_{-1}^0 L_{\nu(s)}(\gam_k(s), \dot \gam_k(s))\,ds\Big ]\le C \ 
\text{for some} \ C>0.  
\end{equation}
Combining \eqref{bd1} and \eqref{Lagran}, 
we deduce that $\|\dot \gam_k\|_{L^2(-1,0)} \le M$ for some $M>0$.
For any $-1 \le a<b \le 0$, we have
$$
|\gam_k(b)-\gam_k(a)| \le \int_a^b|\dot \gam_k(s)|\,ds
\le \Big [  \int_a^b|\dot \gam_k(s)|^2\,ds \Big ]^{1/2}
\Big [ \int_a^b 1\,ds \Big ]^{1/2} \le M |b-a|^{1/2}.
$$
By the Arzela--Ascoli theorem and the weak compactness, 
by passing to a subsequence if necessary, 
$\{\gam_k\}$ converges to 
$\gam \in \AC([-1,0])$ uniformly,
and $\{\dot \gam_k\}$ converges weakly to $\dot \gam$ in $L^2(-1,0)$.

Now we prove that
\begin{equation}\label{lower}
\E_i \Big [
\int_{-1}^0 L_{\nu(s)}(\gam(s), \dot \gam(s))\,ds\Big ]
\le \liminf_{k\to \infty}\E_i \Big [
\int_{-1}^0 L_{\nu(s)}(\gam_k(s), \dot \gam_k(s))\,ds\Big ].
\end{equation}
This is a standard part in the theory of calculus of variations 
but let us present it here for the sake of clarity.
The convexity of $L_i$ gives us that
\begin{align*}
&L_i(\gam_k(s),\dot \gam_k(s)) \ge L_i(\gam_k(s),\dot \gam(s)) +
 D_q L_i(\gam_k(s),\dot \gam(s)) \cdot (\dot \gam_k(s) - \dot \gam(s))\\
 =&\  L_i(\gam_k(s),\dot \gam(s)) 
 +
[D_q L_i(\gam_k(s),\dot \gam(s))- D_q L_i(\gam(s),\dot \gam(s))]\cdot (\dot \gam_k(s) - \dot \gam(s))\\
&
+ D_q L_i(\gam(s),\dot \gam(s)) \cdot (\dot \gam_k(s) - \dot \gam(s)).
\end{align*}
Since $\gam_k$ converges uniformly to $\gam$, we employ 
the Lebesgue dominated convergence theorem to get that
\begin{equation}\label{lower-1}
\lim_{k\to \infty}\E_i \Big [
\int_{-1}^0 L_{\nu(s)}(\gam_k(s), \dot \gam(s))\,ds\Big ]
= \E_i \Big [
\int_{-1}^0 L_{\nu(s)}(\gam(s), \dot \gam(s))\,ds\Big ].
\end{equation}
We use \eqref{Lagran} again to yield that
\[
|D_q L_i(x,q)| \le C(|q|+1) \ 
\text{for}\ x\in \T^n,\ q \in \R^n, \ i=1,2.
\]
It it then straightforward by using the above and 
the Lebesgue dominated convergence theorem 
to see that
\begin{equation}\label{lower-2}
\lim_{k \to \infty} \E_i \Big [ 
\int_{-1}^0 ( D_q L_{\nu(s)}(\gam_k(s),\dot \gam(s))- D_q L_{\nu(s)}(\gam(s),\dot \gam(s)))
\cdot (\dot \gam_k(s) - \dot \gam(s)) \,ds \Big ]=0
\end{equation}
Besides, the weak convergence of $\{\dot \gam_k\}$ to $\dot \gam$ in $L^2(-1,0)$ implies
\begin{equation*}
\lim_{k \to \infty} \E_i \Big [ \int_{-1}^0
 D_q L_{\nu(s)}(\gam(s),\dot \gam(s)) \cdot (\dot \gam_k(s) - \dot \gam(s))
\,ds \Big]=0.
\end{equation*}
We combine \eqref{lower-1}, \eqref{lower-2}, and the above 
to get \eqref{lower}.
Thus, $\gam$ satisfies
\begin{equation}\label{gam-1}
v_i(x) \ge  \E_i \Big [
\int_{-1}^0 L_{\nu(s)}(\gam(s), \dot \gam(s))\,ds +v_{\nu(-1)}(\gam(-1)) \Big ].
\end{equation}
On the other hand, for any $-1 \le a<b \le 0$,
\begin{align*}
&v_i(x) \le  \E_i \Big [
\int_{b}^0 L_{\nu(s)}(\gam(s), \dot \gam(s))\,ds +v_{\nu(b)}(\gam(b)) \Big ],\\
&\E_i[v_{\nu(b)}(\gam(b)) ] \le  \E_i \Big [
\int_{a}^b L_{\nu(s)}(\gam(s), \dot \gam(s))\,ds +v_{\nu(a)}(\gam(a)) \Big ],\\
&\E_i[v_{\nu(a)}(\gam(a)) ] \le  \E_i \Big [
\int_{-1}^a L_{\nu(s)}(\gam(s), \dot \gam(s))\,ds +v_{\nu(-1)}(\gam(-1)) \Big ].
\end{align*}
The above inequalities together with \eqref{gam-1} yield
the conclusion that $\gam \in \cE([-1,0],x,i,(v_1,v_2))$.
\end{proof}

\begin{proof}[Proof of Theorem {\rm \ref{thm:extremal-c}}]
Fix $x\in\T^n$ and $i\in\{1,2\}$. 
We define the sequence 
$\{\gam^k\}_{k\in\N}\subset\AC([-k,-k+1])$ 
recursively as  $\gam^k\in\cE([-k,-k+1],x_{k-1},i,(v_1,v_2))$, 
where 
$x_k:=\gam^k(-k)$ and $x_0=x$. 
Define the curve $\gam \in \AC((-\infty,0])$ by 
$\gam(s)=\gam^k(s)$ for $s \in [-k,-k+1]$ for $k \in \N$. 
Then it is clear to see that $\gam \in  \cE((-\infty,0],x,i,(v_1,v_2))$. 
\end{proof}


\section{Stability on the Extremal Curves}\label{extrem-stabi}
In this section, we establish the following stability result, which 
plays a key role in the proof of Theorem \ref{thm:main}. 
\begin{thm}[Scaling Result]\label{thm:stability}
Let $(v_1,v_2,0)$ be a solution of {\rm (E)}. 
For any $\tau,T\in(0,\infty)$ with 
$\tau<T$ such that 
$\tau/(T-\tau)<\del_0$, 
where $\del_0$ appears in Lemma {\rm \ref{stab-prop2}}, 
and $\gam\in\cE((-\infty,0],x,i,(v_1,v_2))$, we have 
\begin{align}
&u_i(x,T)-\E_i[u_{\nu(-T)}(\gam(-T),\tau)] \nonumber\\
\le&\,  
v_i(x)-\E_i[v_{\nu(-T)}(\gam(-T))] + 
(1+\dfrac{\tau T}{T-\tau})\om(\dfrac{\tau}{T-\tau})
\label{stab-1}
\end{align}
for a fuction $\om:[0,\infty) \to [0,\infty)$ 
which is continuous and $\om(0)=0$. 
\end{thm}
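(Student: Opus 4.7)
Set $\alpha:=T/(T-\tau)=1+\tau/(T-\tau)\in(1,1+\del_0)$, $\epsilon:=\tau/(T-\tau)$, and define the reparameterisation $\tilde\gam(s):=\gam(\alpha s)$ for $s\in[-(T-\tau),0]$, so that $\tilde\gam(0)=x$ and $\tilde\gam(-(T-\tau))=\gam(-T)$. The plan is to use $\tilde\gam$ as a competitor in the dynamic programming principle (DPP) for $u_i(x,T)$ split at time $-(T-\tau)$, subtract the extremality identity for $\gam$, and control the resulting discrepancies via a second-order Taylor expansion in $\alpha$ around $\alpha=1$ combined with a Hamilton--Jacobi identity along $\gam$.

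The DPP yields $u_i(x,T)\le \E_i[\int_{-(T-\tau)}^{0}L_{\nu(s)}(\tilde\gam,\dot{\tilde\gam})\,ds+u_{\nu(-(T-\tau))}(\gam(-T),\tau)]$, while extremality gives $v_i(x)-\E_i[v_{\nu(-T)}(\gam(-T))]=\E_i[\int_{-T}^{0}L_{\nu(s)}(\gam,\dot\gam)\,ds]$. Applying Fubini (Lemma~\ref{lem:half2}) and the change of variables $\sigma=\alpha s$, the claim reduces to controlling
\begin{align*}
\textrm{(D1)}&:=\int_{-T}^{0}\Bigl[\tfrac{1}{\alpha}\widetilde L(\gam,\alpha\dot\gam,\sigma/\alpha)-\widetilde L(\gam,\dot\gam,\sigma)\Bigr]d\sigma,\\
\textrm{(D2)}&:=\tfrac{1}{2}(e^{-2(T-\tau)}-e^{-2T})(u_i-u_j)(\gam(-T),\tau),
\end{align*}
where $\widetilde L(x,q,s):=\tfrac{1}{2}(1+e^{2s})L_i(x,q)+\tfrac{1}{2}(1-e^{2s})L_j(x,q)$ is the effective time-dependent Lagrangian arising from Fubini. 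For $\textrm{(D2)}$, the elementary optimisation $\tau e^{-2\tau/\epsilon}\le\epsilon/(2e)$, the bound $\tau\le T\epsilon$, and the at most linear-in-$\tau$ growth of $\|u_i(\cdot,\tau)-u_j(\cdot,\tau)\|_\infty$ give $|\textrm{(D2)}|\le C\epsilon$, absorbed into the target modulus.

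For $\textrm{(D1)}$, assumptions (A3)--(A4) and the uniform bound on $|\dot\gam|$ along extremal curves (a consequence of the quadratic growth \eqref{Lagran} and the Lipschitz continuity of $v_1,v_2$) allow a second-order Taylor expansion of the integrand in $\alpha$ at $\alpha=1$:
\begin{equation*}
\tfrac{1}{\alpha}\widetilde L(\gam,\alpha\dot\gam,\sigma/\alpha)-\widetilde L(\gam,\dot\gam,\sigma)=(\alpha-1)\bigl[\widetilde H(\gam,\widetilde p,\sigma)-\sigma e^{2\sigma}(L_i-L_j)(\gam,\dot\gam)\bigr]+R(\sigma,\alpha),
\end{equation*}
where $\widetilde p:=D_q\widetilde L(\gam,\dot\gam,\sigma)$, $\widetilde H$ is the Fenchel dual of $\widetilde L$ in the $q$ variable, and $|R(\sigma,\alpha)|\le C(\alpha-1)^2(1+(|\sigma|+\sigma^2)e^{2\sigma/\alpha})$. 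The remainder integrates to $\int_{-T}^{0}R\,d\sigma=O((\alpha-1)^2T)=(\tau T/(T-\tau))\cdot O(\epsilon)$, matching the $\tau T/(T-\tau)$ factor in the target error.

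The critical step is the first-order cancellation. Extremality of $\gam$ forces both the Young inequality $p_k\cdot\dot\gam\le L_k(\gam,\dot\gam)+H_k(\gam,p_k)$ and the subsolution inequality $H_k(\gam,p_k)+v_k-v_{k'}\le0$ used in the proof of Proposition~\ref{prop:sub2} (cf.~\eqref{ineq-composite}) to be equalities almost everywhere on $(-\infty,0]$, and this for both $k\in\{1,2\}$. Hence $p_k=D_qL_k(\gam,\dot\gam)\in\pl_cv_k(\gam)$ and $H_k(\gam,p_k)=v_{k'}-v_k$; taking the weighted sum yields the Hamilton--Jacobi identity $\widetilde H(\gam,\widetilde p,\sigma)=\tfrac{1}{2}(1+e^{2\sigma})H_i+\tfrac{1}{2}(1-e^{2\sigma})H_j=e^{2\sigma}(v_j-v_i)(\gam)$ along $\gam$. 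Substituting, the first-order integrand equals $e^{2\sigma}[(v_j-v_i)(\gam)-\sigma(L_i-L_j)(\gam,\dot\gam)]$, which is integrable on $(-\infty,0]$ with norm independent of $T$ thanks to the exponential decay, so its contribution is $(\alpha-1)\cdot O(1)=O(\epsilon)$. Combining these estimates yields $\textrm{(D1)}+\textrm{(D2)}\le (1+\tau T/(T-\tau))\,\om(\tau/(T-\tau))$ for a suitable modulus $\om$. The main obstacle is rigorously establishing the pointwise Hamilton--Jacobi identity along $\gam$ despite the mere Lipschitz regularity of $v_1,v_2$; this is achieved via the chain rule of Lemma~\ref{lem:composite} together with the equality cases in \eqref{ineq-composite} forced by the extremality of $\gam$.
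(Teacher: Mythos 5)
Your proof is correct and follows essentially the same route as the paper: reparameterize the extremal curve as $\gamma((1+\ep)s)$ with $\ep=\tau/(T-\tau)$, invoke the DPP split at $T-\tau$, change variables, Taylor-expand the rescaled Lagrangian, and crucially observe that the $e^{2\sigma}$ weights make the coupling contribution absolutely integrable on $(-\infty,0]$ (the paper highlights this as the decisive point in Remark~\ref{difficulty-2}). Your packaging differs in two minor ways. First, you fold the weights into the time-dependent Lagrangian $\widetilde L(x,q,s)=\tfrac12(1+e^{2s})L_i+\tfrac12(1-e^{2s})L_j$ and Taylor-expand $\alpha\mapsto\alpha^{-1}\widetilde L(\gamma,\alpha\dot\gamma,\sigma/\alpha)$ in one shot, so the discrepancy between the weights $e^{2\sigma/\alpha}$ and $e^{2\sigma}$ shows up as the $-\sigma e^{2\sigma}(L_i-L_j)$ piece of the first-order term; the paper instead proves a pointwise Lemma~\ref{stab-prop2} for each $L_k(\gamma,(1+\ep)\dot\gamma)$ separately, then compares $e^{2t/(1+\ep)}$ with $e^{2t}$ as an independent step via $|e^{2t/(1+\ep)}-e^{2t}|\le -2t\ep e^{2t/(1+\ep)}$. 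Second, your remainder bound uses a genuine second-order Taylor expansion and therefore leans on (A3)--(A4), whereas the paper's Lemma~\ref{stab-prop2} gets by with the mean value theorem plus a modulus of continuity of $D_qL_i$ on a compact set, requiring only (A1)--(A2); since the paper itself adopts (A3)--(A4) ``to avoid technical difficulties,'' this is a cosmetic loss of generality. Two small points to tighten: the identity $\widetilde H(\gamma,\widetilde p,\sigma)=\tfrac12(1+e^{2\sigma})H_i+\tfrac12(1-e^{2\sigma})H_j$ must be read with each $H_k$ evaluated at its \emph{own} dual $p_k=D_qL_k(\gamma,\dot\gamma)$, not at $\widetilde p$ (the Fenchel transform of a convex combination is not a convex combination of Fenchel transforms); along the extremal curve this holds because $\widetilde H(\widetilde p)=\widetilde p\cdot\dot\gamma-\widetilde L(\dot\gamma)=\sum_k w_k(p_k\cdot\dot\gamma-L_k)=\sum_k w_kH_k(\gamma,p_k)$, which is what you are implicitly using but should state explicitly. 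Also, for (D2) you only need $\|u_i(\cdot,\tau)-u_j(\cdot,\tau)\|_\infty$ bounded (as the paper uses), not linear-in-$\tau$; your extra factor $\tau e^{-2\tau/\ep}\le\ep/(2e)$ already delivers the $O(\ep)$ bound without the linearity claim.
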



\begin{lem}\label{stab-prop1}
For any $T>0$ and $\gam \in  \cE((-\infty,0],x,i,(v_1,v_2))$.  
There exists $(p_1,p_2) \in L^\infty((-T,0),\R^n)^2$ such that
\begin{align*}
&L_i(\gam(t),\dot \gam(t))+H_i(\gam(t),p_i(t))=p_i(t) \cdot \dot \gam(t), \\   
&H_i(\gam(t),p_i(t))+v_i(\gam(t))-v_j(\gam(t)) = 0, \ \text{and} \ 
p_i(t) \in \partial_c v_i(\gam(t))  
\end{align*}
for a.e. $t \in (-T,0)$.
\end{lem}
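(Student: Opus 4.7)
The plan is to specialize Lemma \ref{lem:composite} to the extremal curve $\gam$ on $[-T,0]$, to extend the viscosity subsolution inequality from points of differentiability of $v_i$ to every element of its Clarke differential, and finally to exploit extremality to upgrade the two inequalities used in the proof of Proposition \ref{prop:sub2} to a.e.\ equalities.

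First, since $v_i$ is a viscosity solution of the coercive equation {\rm (E)} it lies in $\W(\T^n)$, so Lemma \ref{lem:composite} applied to $\gam$ on $[-T,0]$ produces a pair $(p_1,p_2)\in\Li((-T,0),\R^n)^2$ satisfying $p_i(t)\in\pl_cv_i(\gam(t))$ a.e., together with the chain-rule identity
\[
v_i(x)=\E_i\Big[\int_{-T}^{0}\!\Big(p_{\nu(s)}(s)\cdot\dot\gam(s)+\sum_{k=1}^{2}(v_{\nu(s)}-v_k)(\gam(s))\Big)\,ds+v_{\nu(-T)}(\gam(-T))\Big].
\]
Next, I extend the subsolution property to the Clarke differential: for every $y\in\T^n$ and $p\in\pl_cv_i(y)$ one has $H_i(y,p)+v_i(y)-v_j(y)\le 0$. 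Indeed, at every differentiability point $z$ of the Lipschitz function $v_i$ the viscosity subsolution inequality yields $H_i(z,Dv_i(z))+v_i(z)-v_j(z)\le 0$; writing $p$ as a limit of convex combinations $\sum_m\lambda_{k,m}Dv_i(y_{k,m})$ with $y_{k,m}\to y$, the convexity of $H_i$ in $p$ from {\rm (A2)} together with the continuity of $H_i,v_i,v_j$ passes this inequality through the closed convex hull defining $\pl_cv_i(y)$.

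Combining the pointwise Fenchel--Young inequality $p_i(t)\cdot\dot\gam(t)\le L_i(\gam(t),\dot\gam(t))+H_i(\gam(t),p_i(t))$ with the extended subsolution inequality and substituting into the chain-rule identity reproduces exactly the estimate
\[
v_i(x)\le\E_i\Big[\int_{-T}^{0}\!L_{\nu(s)}(\gam,\dot\gam)\,ds+v_{\nu(-T)}(\gam(-T))\Big].
\]
Since $\gam\in\cE((-\infty,0],x,i,(v_1,v_2))$, specializing the extremality definition to $[c,d]=[-T,0]$ makes the right-hand side equal to $v_i(x)$, so both inequalities used in the estimate must be equalities in expectation. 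Their pointwise defects, namely the Fenchel gap $L_{\nu(s)}(\gam,\dot\gam)+H_{\nu(s)}(\gam,p_{\nu(s)})-p_{\nu(s)}\cdot\dot\gam$ and the subsolution gap $-[H_{\nu(s)}(\gam,p_{\nu(s)})+v_{\nu(s)}(\gam)-v_{j'}(\gam)]$ (with $j'$ complementary to $\nu(s)$), are both nonnegative. By Lemma \ref{lem:half1} we have $\bP_i(\nu(s)=k)>0$ for every $s\in(-T,0)$ and every $k\in\{1,2\}$, so a Fubini-type argument forces each defect to vanish a.e.\ in $s$ for each fixed state, yielding the two identities claimed in the lemma for both $i\in\{1,2\}$. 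The main delicate point is the extension of the subsolution inequality to the Clarke differential; once that is in place, the rest is the equality case of Proposition \ref{prop:sub2} coupled with the irreducibility of the Markov chain.
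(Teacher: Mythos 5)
Your proof is correct and follows the same strategy as the paper's own argument: invoke Lemma \ref{lem:composite} to get the Clarke selections $p_i$ and the chain-rule identity, combine the Fenchel--Young inequality with the subsolution inequality extended to the Clarke differential (exactly the two inequalities used in the proof of Proposition \ref{prop:sub2}), and then use extremality together with the strict positivity of $p_k(s)$ for $s<0$ (Lemma \ref{lem:half1}) to force all nonnegative defects to vanish almost everywhere. The paper's proof is more compressed and simply cites \eqref{ineq-composite} and "the convexity of $H_i$"; you have usefully spelled out the two points left implicit there, namely that the viscosity subsolution inequality transfers to every element of $\pl_c v_i$ via convexity of $H_i$ in $p$ and continuity, and that the expectation equality together with $\bP_i(\nu(s)=k)>0$ for each $k$ yields the a.e.\ pointwise equalities for both indices.
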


\begin{proof}
By Lemma \ref{lem:composite} there exists 
$(p_1,p_2) \in L^\infty((-T,0),\R^n)^2$ such that 
$p_i(t) \in \partial_c v_i(\gam(t))$ for $a.e.$ $t\in (-T,0)$ 
and satisfies \eqref{ineq-composite} 
in the proof of Proposition \ref{prop:sub2}.
Also, note that 
by the convexity of $H_i$ and the definition of 
$L_i$,  
we have 
$H_i(\gam(t),p_i(t))+v_i(\gam(t))-v_j(\gam(t)) \le 0$ and 
$H_i(\gam(t),p_i(t))+L(\gam(t),\dot\gam(t))\ge p_i(t)\cdot\dot\gam(t)$ 
for $a.e.$ $t \in (-T,0)$ and $i=1,2$.  
Since $\gam$ is an extremal curve,
all inequalities above must become the equalities, which 
give the desired conclusion. 
\end{proof}

\begin{lem}\label{stab-prop2}
Let $(v_1,v_2,0)$ be a solution of {\rm(E)}. 
There exists $\del_0>0$ such that 
for any $\ep\in[0,\del_0]$ and $\gam \in \cE((-\infty,0],x,i,(v_1,v_2))$ 
we have
$$
L_i(\gam(t),(1+\ep)\dot \gam(t)) \le
(1+\ep) L_i(\gam(t),\dot \gam(t))-\ep (v_i-v_j)(\gam(t))+\ep \om(\ep)
$$
for a fuction $\om:[0,\infty) \to [0,\infty)$ 
which is continuous and $\om(0)=0$.
\end{lem}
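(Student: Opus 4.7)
The plan is to exploit the Fenchel duality between $H_i$ and $L_i$ together with the pointwise Euler--Lagrange-type identities provided by Lemma~\ref{stab-prop1}. Under (A3) each $L_i\in C^2(\T^n\times\R^n)$ and
\[
D^2_{qq}L_i(x,q) = \bigl(D^2_{pp}H_i(x,D_qL_i(x,q))\bigr)^{-1}\le \theta^{-1}I,
\]
uniformly in $(x,q)$. This uniform Hessian bound on $L_i$ in the $q$-variable is the quantitative input that will control the second-order correction produced by scaling $\dot\gam$ by the factor $1+\ep$.

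The next step is a uniform velocity bound: there exists $M>0$, depending only on $\|v_1\|_{\infty}$, $\|v_2\|_{\infty}$, $\theta$, and the constant $C$ in (A4), such that every $\gam\in\cE((-\infty,0],x,i,(v_1,v_2))$ satisfies $|\dot\gam(t)|\le M$ for a.e.\ $t\le 0$. Indeed, by Lemma~\ref{stab-prop1} the dual momentum satisfies $H_i(\gam(t),p_i(t)) = v_j(\gam(t))-v_i(\gam(t))$, whose right-hand side is bounded uniformly since $v_1,v_2\in C(\T^n)$ are bounded; coercivity (A1) then gives $|p_i(t)|\le R$ for a constant $R$ independent of $t$ and $\gam$. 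The identity $L_i(\gam,\dot\gam)+H_i(\gam,p_i)=p_i\cdot\dot\gam$ is exactly the Fenchel-conjugacy relation, so under (A3) the smoothness and strict convexity of $H_i$ yield $\dot\gam(t)=D_pH_i(\gam(t),p_i(t))$, and the quadratic upper bound in (A4) transfers $|p_i(t)|\le R$ into $|\dot\gam(t)|\le M$.

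Finally, for fixed $t$ I would Taylor expand $\lambda\mapsto L_i(\gam(t),(1+\lambda)\dot\gam(t))$ at $\lambda=0$, obtaining
\[
L_i(\gam(t),(1+\ep)\dot\gam(t)) \le L_i(\gam(t),\dot\gam(t)) + \ep\,p_i(t)\cdot\dot\gam(t) + \frac{\ep^2}{2\theta}|\dot\gam(t)|^2
\]
directly from the Hessian bound, where the linear term uses that $p_i(t)=D_qL_i(\gam(t),\dot\gam(t))$, the Legendre dual of $\dot\gam(t)$ supplied by Lemma~\ref{stab-prop1}. That lemma further gives $p_i(t)\cdot\dot\gam(t) = L_i(\gam(t),\dot\gam(t))-(v_i-v_j)(\gam(t))$, so substituting and using $|\dot\gam(t)|\le M$ yields the lemma with $\om(\ep):=M^2\ep/(2\theta)$; one may take, for instance, $\del_0=1$. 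The only mild obstacle in this argument is securing the velocity bound uniformly over all extremal curves on the entire half-line $(-\infty,0]$: this is where it is essential that (A1) combined with the boundedness of $v_1,v_2$ controls $|p_i|$ globally in time, after which (A3)--(A4) convert the bound into one on $|\dot\gam|$ via the Legendre correspondence.
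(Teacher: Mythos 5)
Your proof is correct and follows the same skeleton as the paper's: obtain a uniform bound $|\dot\gam|\le M$ on extremal velocities via Lemma~\ref{stab-prop1} and coercivity, then expand $\lambda\mapsto L_i(\gam,(1+\lambda)\dot\gam)$ and substitute the Euler--Lagrange identity $D_qL_i(\gam,\dot\gam)\cdot\dot\gam=L_i(\gam,\dot\gam)-(v_i-v_j)(\gam)$.

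The one genuine difference is how the remainder is controlled. The paper uses the mean value theorem at first order together with uniform continuity of $D_qL_i$ on the compact set $S$, yielding an abstract modulus $\tilde\om$; this only requires $D_qL_i$ to exist and be continuous (i.e.\ strict convexity of $H_i$), and accordingly the paper chooses $\del_0$ small enough to keep $(1+\ep)\dot\gam$ inside $\inter(\dom L_1\cap\dom L_2)$, which matters if $L_i$ is not finite everywhere. You instead Taylor expand to second order and invoke the Hessian bound $D^2_{qq}L_i\le\theta^{-1}I$ inherited from (A3); this buys an explicit rate $\om(\ep)=M^2\ep/(2\theta)$ and lets you take $\del_0=1$ since under (A3)--(A4) the Lagrangians are finite on all of $\R^n$. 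The trade-off is that your version leans on the quantitative assumptions (A3)--(A4), which the paper flags as removable technical conveniences, whereas the MVT/modulus argument is the one that survives in the more general setting. One small inaccuracy: the step converting $|p_i(t)|\le R$ into $|\dot\gam(t)|\le M$ is not really a consequence of the quadratic upper bound in (A4); it is simply continuity of $D_pH_i$ on the compact set $\T^n\times\ol{B(0,R)}$ (equivalently the compactness of the set $S$ as in the paper). The conclusion is unaffected.
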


\begin{proof}
Let $(p_1,p_2)$ be the pair of functions given by Lemma \ref{stab-prop1}. 
We notice that
$$
H_i(\gam(t),p_i(t))+H_j(\gam(t),p_j(t))=0
\ \text{for a.e.} \ t \in (-\infty,0]
$$
by Lemma \ref{stab-prop1}. 
Set 
\begin{align*}
&Q:=\{(x,p_1,p_2)\in \T^n \times \R^{2n}| \ H_1(x,p_1)+H_2(x,p_2) = 0\}, \\
&S:=\{(x,q_1,q_2)|\ q_i \in D_p^{-} H_i(x,p_i) \
\text{for some} \ (x,p_1,p_2)\in Q \} 
\end{align*}
and then $Q$ and $S$ are compact
in $\T^n \times \R^{2n}$ in view of the coercivity of $H_i$. 
We notice that $(\gam(t),\dot \gam(t),\dot \gam(t)) \in S$ for a.e. $t \in (-\infty,0)$ and thus $|\dot{\gam}(t)|\le M$ for some $M>0$. 
We choose $\del_0\in(0,1)$ so that
$(x,(1+\ep)\dot{\gam})\in\inter(\dom L_1\cap\dom L_2)$ 
for all $\ep\in[0,\del_0]$, 
where  $\dom L_i:=\{(x,\xi)\in\T^n\times\R^n\mid L_i(x,\xi)<\infty\}$.

By Lemma \ref{stab-prop1},
\begin{align}
&
L_i(\gam(t),\dot \gam(t))=p_i(t)\cdot \dot \gam(t)-H_i(\gam(t),p_i(t)) \nonumber\\
=&\,
D_q L_i(\gam(t),\dot \gam(t)) \cdot \dot \gam(t)+ (v_i-v_j)(\gam(t)).
\label{add-2}
\end{align}
Note that since $H_i(x,\cdot)$ are strictly convex,
$D_qL_i(x,\xi)$ exists, and is continuous on $\dom L_i$.

Due to the mean value theorem and \eqref{add-2},  
there exists $\theta_t\in(0,1)$ and a fuction $\om:[0,\infty) \to [0,\infty)$ 
which is continuous and $\om(0)=0$ such that 
\begin{align*}
L_i(\gam(t),(1+\ep)\dot \gam(t))
=\ & L_i(\gam(t),\dot \gam(t))+\ep D_q L_i(\gam(t),(1+\theta_t \ep)\dot \gam(t))\cdot \dot \gam(t)\\
\le \ &  L_i(\gam(t),\dot \gam(t))+\ep D_q L_i(\gam(t),\dot \gam(t))\cdot \dot \gam(t)
+ \ep |\dot \gam(t)| \om(\ep |\dot \gam(t)|) \\
\le \ & (1+\ep) L_i (\gam(t),\dot \gam(t)) - \ep (v_i-v_j)(\gam(t))
+ \ep \tilde{\om}(\ep),   
\end{align*}
where we set $\tilde{\om}(r):=M\max_{s \in [0,Mr]} \om(s)$. 
\end{proof}

\begin{rem} \label{difficulty-1}
We notice that the result of Lemma \ref{stab-prop2} is different from the similar one
for single equations (see \cite[Lemma 7.2]{I2008} for details).
More precisely, the natural appearance of the coupling terms $-\ep (v_i-v_j)(\gam(t))$
makes the analysis for weakly coupled systems more difficult. 
We could not proceed to establish large time behavior results in a crude way.
It turns out that {the weights} $(1/2)(1+e^{2t})$ and $(1/2)(1-e^{2t})$ 
for $t<0$ are the key factors helping us overcome this difficulty as in the proof of Theorem
\ref{thm:stability} below.
\end{rem}

\begin{proof}[Proof of Theorem {\rm \ref{thm:stability}}]
Set $\ep:=\tau/(T-\tau)$ and $T_\ep:=T/(1+\ep)$.
Notice that $T=T_\ep+\ep T_\ep=T_\ep+\tau$.
We have
\begin{align*}
&u_i(x,T)= u_i(\gam(0),T) = u_i(\gam(0),T_\ep+\tau)\\
&=\inf \Big \{ 
\E_i \Big [ 
\int_{-T_\ep}^{0} L_{\nu(s)}(\eta(s),\dot \eta(s))\,ds + u_{\nu(-T_\ep)}(\eta(-T_\ep),\tau)
\Big ]\ | \ \eta\in \AC([-T_\ep,0])\ \text{with}\ \eta(0)=x
\Big \}.
\end{align*}
Take $\gam \in \cE((-\infty,0],x,i,(v_1,v_2))$ and 
set $\eta(s):=\gam((1+\ep)s)$ to derive that 
$$
u_i(x,T) \le 
\E_i \Big [ 
\int_{-T_\ep}^{0} L_{\nu(s)}(\gam((1+\ep)s),(1+\ep)\dot \gam((1+\ep)s))\,ds + 
u_{\nu(-T_\ep)}(\gam(-T),\tau)
\Big ]. 
$$
Make the change of variable $t=(1+\ep)s$ and use 
Lemma \ref{lem:half2} to get
\begin{align}
u_i(x,T) \le &\,
\E_i \Big [ 
\int_{-T}^{0}\dfrac{1}{1+\ep} L_{\nu(t/(1+\ep))}(\gam(t),(1+\ep)\dot \gam(t))\,dt + 
u_{\nu(-T_\ep)}(\gam(-T),\tau)
\Big ]
\label{rem-easy}\\
=& 
\int_{-T}^{0} 
\dfrac{1+e^{2t/(1+\ep)}}{2(1+\ep)} L_i(\gam(t),(1+\ep)\dot \gam(t))\,dt
+\int_{-T}^{0} 
\dfrac{1-e^{2t/(1+\ep)}}{2(1+\ep)}  L_j(\gam(t),(1+\ep)\dot \gam(t))\,dt\nonumber\\
&+\E_i \big [ u_{\nu(-T_\ep)}(\gam(-T),\tau)
\big ]
. 
\label{key0}
\end{align}
We use Lemma \ref{stab-prop2} in the above inequality to deduce
\begin{align}
&u_i(x,T) - \E_i \big [ u_{\nu(-T_\ep)}(\gam(-T),\tau)\big]
\nonumber\\
 \le& \int_{-T}^{0} \dfrac{1}{2}(1+e^{2t/(1+\ep)})L_i(\gam(t),\dot \gam(t))
+ \dfrac{1}{2}(1-e^{2t/(1+\ep)})L_j(\gam(t),\dot \gam(t)) \,dt
\nonumber\\
&+\dfrac{\ep}{1+\ep} \int_{-T}^{0} e^{2t/(1+\ep)} (v_j-v_i)(\gam(t))\,dt + T\ep \om(\ep).
\label{key1}
\end{align}

We use the fact that $v_j-v_i$ is bounded in $\T^n$ to derive that
\begin{equation}\label{key2}
\Big | \dfrac{\ep}{1+\ep} \int_{-T}^{0} e^{2t/(1+\ep)} (v_j-v_i)(\gam(t))\,dt   \Big |
\le C\ep \int_{-T}^0 e^{2t/(1+\ep)}\,dt \le C\ep.
\end{equation}
Furthermore, for $t<0$, $|e^{2t/(1+\ep)}-e^{2t}| \le 
{-2t\ep e^{2t/(1+\ep)}}$.
This together with the facts that $u_i$ are bounded and $|\dot \gam(t)| \le M$ imply
$\E_i \big[ u_{\nu(-T_\ep)}(\gam(-T),\tau)\big]
\le \E_i \big[ u_{\nu(-T)}(\gam(-T),\tau)\big]
+C \ep$, and
$$
\Big |
 \int_{-T}^{0} (e^{2t/(1+\ep)}-e^{2t})L_k(\gam(t),\dot \gam(t)) \,dt
\Big | \le 
-C_1\ep  \int_{-T}^{0} te^{2t/(1+\ep)}\,dt
\le C_2\ep,
$$
for $k=1,2$ and $C_1,C_2>0$ independent of $\ep$.

Summing up everything, we obtain
\begin{align*}
&u_i(x,T) - \E_i \big[ u_{\nu(-T)}(\gam(-T),\tau)\big]\\
 \le& \int_{-T}^{0}\Big [ \dfrac{1}{2}(1+e^{2t})L_i(\gam(t),\dot \gam(t))
+ \dfrac{1}{2}(1-e^{2t})L_j(\gam(t),\dot \gam(t)) \Big ]\,dt+C\ep +T\ep \om(\ep)\\
=&\, v_i(x) - \E_i \big[ v_{\nu(-T)}(\gam(-T))\big]+C \dfrac{\tau}{T-\tau}+\dfrac{\tau T}{T-\tau} \om (\dfrac{\tau}{T-\tau}), 
\end{align*}
which is the desired conclusion. 
\end{proof}

\begin{rem}\label{difficulty-2}
The new representation formula \eqref{C-rep}
with the weights $(1/2)(1+e^{2t})$ and $(1/2)(1-e^{2t})$
for $t<0$ appears naturally in both the statement and the proof of Theorem \ref{thm:stability}
pointing out a major difference between single equations and weakly coupled systems.
With new representation formula \eqref{C-rep}, we could explicitly calculate \eqref{key0}
and \eqref{key1} and thus identify the main obstacle coming from the coupling term,
the second last term in \eqref{key1}.
As mentioned in Remark \ref{difficulty-1}, we could not estimate the coupling term in a crude way. 
For instance, in \eqref{rem-easy} we can easily see by Lemma \ref{stab-prop2} that
\begin{align*}
&\E_i \Big [ 
\int_{-T}^{0}\dfrac{1}{1+\ep} L_{\nu(t/(1+\ep))}(\gam(t),(1+\ep)\dot \gam(t))\,dt 
\Big ]\\
&\le\, 
\E_i \Big [ 
\int_{-T}^{0}L_{\nu(t/(1+\ep))}(\gam(t),\dot \gam(t)) 
-\frac{\ep}{1+\ep}(v_{\nu(t/(1+\ep))}-v_{3-\nu(t/(1+\ep))})\,dt
\Big ]+ \ep T \om(\ep)\\
&\le\, 
\E_i \Big [ 
\int_{-T}^{0}L_{\nu(t/(1+\ep))}(\gam(t),\dot \gam(t))\,dt\Big ]
+\ep T \om(\ep)+CT\ep/(1+\ep) 
\end{align*}
by using the fact that $\|v_1-v_2\|_{L^\infty(\T^n)} \le C$. 
But the last term in the above,
which is of order $O(\tau)$ and does not vanish as $\ep \to 0$,  is not enough to get the large-time asymptotics 
as we can see in the proof of Theorem \ref{thm:main}. 
It turns out that {the weights} played an essential role here
and helped us in establishing {the key estimate \eqref{key2} leading to the large time behavior result.}
\end{rem}


\section{The Proof of Convergence} \label{proof-main}
We define the functions $\ol{u}_i$ and $\ul{u}_i$ 
($i=1,2$) by 
\begin{align*}
\ol{u}_i(x)=\limsup_{s \to \infty} u_i(x,s), \qquad \ul{u}_i(x)=\liminf_{s \to \infty} u_i(x,s).
\end{align*}
By stability of viscosity solutions, we have that $(\ol{u}_1,\ol{u}_2)$ is a subsolution of (E) 
and $(\ul{u}_1,\ul{u}_2)$ is a supersolution of (E). In order to establish large time behavior result,
we need to obtain that $(\ol{u}_1,\ol{u}_2)=(\ul{u}_1,\ul{u}_2)$.
\begin{proof}[Proof of Theorem {\rm\ref{thm:main}}]
Assume by contrary that $(\ol{u}_1, \ol{u}_2) \neq (\ul{u}_1, \ul{u}_2)$.
Take $(\phi_1,\phi_2)$ to be the maximal solution of (E) such that $\phi_i \le \ul{u}_i$ for $i=1,2$.
Without loss of generality, we may assume that there exists $x\in \T^n$ such that
\begin{equation}\label{pm1}
\ol{u}_1(x)+\ul{u}_1(x)-2\phi_1(x)=\max_{i=1,2} \max_{z\in\T^n} \left ( \ol{u}_i(z)+\ul{u}_i(z)-2\phi_i(z) \right)=:\al>0.
\end{equation}

We assume first that $\ol{u}_1(x)>\ul{u}_1(x)$. Take $\gam \in \cE((-\infty,0],x,1,(\phi_1,\phi_2))$. We can choose a sequence
$\{T_m\} \subset (0,\infty)$ converging to $\infty$ such that 
$\lim_{m \to \infty} u_1(x,T_m)=\ol{u}_1(x) > \ul{u}_1(x)$.
Without loss of generality, we assume further that $\gam(-T_m) \to y \in \T^n$ as $m\to \infty$.
We apply Theorem \ref{thm:stability}
 to get
\begin{align*}
&u_1(x,T_m) - 
\Big\{\dfrac{1}{2}(1+e^{-2T_m}) u_1(\gam(-T_m),\tau)
+\dfrac{1}{2}(1-e^{-2T_m}) u_2(\gam(-T_m),\tau)\Big\}\\
\le &\, 
\phi_1(x) - 
\Big\{\dfrac{1}{2}(1+e^{-2T_m}) \phi_1(\gam(-T_m))
+\dfrac{1}{2}(1-e^{-2T_m})\phi_2(\gam(-T_m))\Big\}
+(1+\dfrac{\tau T_m}{T_m-\tau})\om(\dfrac{\tau}{T_m-\tau})
\notag
\end{align*}
for any fixed $\tau>0$ and $m$ large enough.
Let $m \to \infty$ in the above inequality to yield
\begin{equation}\label{pm2}
\ol{u}_1(x) - \dfrac{1}{2}(u_1(y,\tau)+u_2(y,\tau)) \le \phi_1(x) - \dfrac{1}{2}(\phi_1(y)+\phi_2(y)).
\end{equation}
Take $i=\{1,2\}$ such that  $\ol{u}_i(y)-\phi_i(y) \ge \ol{u}_j(y)-\phi_j(y)$ for $j=3-i$.
Choose $\tau=t_n$ where $t_n \to \infty$ such that $u_i(y,t_n) \to \ul{u}_i(y)$ in \eqref{pm2} to get
\begin{equation*}
\ol{u}_1(x) - \phi_1(x) \le \dfrac{1}{2} (\ul{u}_i(y)+\ol{u}_j(y))-\dfrac{1}{2}(\phi_1(y)+\phi_2(y))
\le \dfrac{1}{2}(\ul{u}_i(y)+\ol{u}_i(y))-\phi_i(y),
\end{equation*}
which contradicts \eqref{pm1} as
$$
\ol{u}_1(x)+\ul{u}_1(x)-2\phi_1(x)< 2(\ol{u}_1(x) - \phi_1(x)) \le \ul{u}_i(y)+\ol{u}_i(y)-2\phi_i(y), 
$$
because we are assuming $\ol{u}_1(x)>\ul{u}_1(x)$.

Finally, we need to handle the case where $\ul{u}_1(x)=\ol{u}_1(x)$. It is then immediate 
that $\ul{u}_2(x)=\ol{u}_2(x)$. We will show that this could not happen because
of the maximality of $(\phi_1,\phi_2)$. Define, for $i=1,2$ and $y \in \T^n$,
\begin{multline*}
\psi_i(y)=\inf\Big\{ v_i(y) \mid (v_1,v_2) \ \text{is a supersolution of (E) with} \\
 v_j(x)=\ul{u}_j(x),\ \text{and}\  v_j \ge \phi_j \ \text{for} \ j=1,2 \Big\}.
\end{multline*}
It is clear that $(\psi_1,\psi_2)$ is a supersolution of (E) and furthermore it is a solution of (E) in 
$\T^n \setminus \{x\}$ by Perron's method. We now claim that $(\psi_1,\psi_2)$
is actually a solution of (E), which contradicts the maximality of $(\phi_1,\phi_2)$ and \eqref{pm1}.
For any $p \in D^{-} \psi_1(x)$, it is straightforward that $p \in D^{-}\ol{u}_1(x)$, and
$$
H_1(x,p)+\psi_1(x)-\psi_2(x)=H_1(x,p)+\ol{u}_1(x)-\ol{u}_2(x) =0,
$$
which in view of the characterization of viscosity solution of convex first order equations by
Barron and Jensen \cite{BJ},
yields that $(\psi_1,\psi_2)$ is a solution of (E).
The proof is complete.
\end{proof}

\begin{rem}\label{generalization}
Let us notice that systems of $m$-equations for $m \ge 2$ can be treated in the same way as above.
More precisely, the weakly coupled system of $m$-equations ($m\ge 2$) is consider to be of the form
\begin{equation}\label{general-system}
(u_i)_t + H_i(x,Du_i)+\sum_{j=1}^m c_{ij} u_j=0 \ \text{in} \ \Q,\ \text{for} \ 1 \le i \le m, 
\end{equation}
where $H_i$ satisfy (A1), (A2) for all $i=1,\ldots,m$ and 
\begin{equation*}\label{gen-cij}
c_{ii} \ge 0, \ c_{ij} \le 0 \ \text{for}\ i \ne j,\ \text{and} 
\sum_{i=1}^m c_{ij}=\sum_{j=1}^m c_{ij}=0,
\end{equation*}
then the result of Theorem \ref{thm:main} holds.
For simplicity, we assume further that the matrix $(c_{ij})$ is irreducible, i.e.,
$$
\text{(M) For any} \
I \varsubsetneq \{1,\ldots,m\}, \
\text{there exist} \ i \in I, \ \text{and} \
j \in \{1,\ldots,m\} \setminus I \ \text{such that} \
c_{ij} \ne 0.
$$
Condition (M) is not needed in general and can be removed as in \cite[Section 3.3]{MT1}.

In this general setting, the weights $1/2(1+e^{2s})$ and $1/2(1-e^{2s})$ for $s<0$
are replaced by the general weights $\phi_1,\ldots, \phi_m$, which solve the following system of ODE
$$
\begin{cases}
-(\phi_k)_t + \sum_{j=1}^m c_{kj} \phi_j=0 \ \text{in}\ (-\infty,0),\ \text{for}\ k=1,\ldots,m, \\ 
\phi_k(0)=\del_i^k
\end{cases}
$$
where $\del_i^k=1$ if $k=i$, and $\del_i^k=0$ otherwise for given $i \in \{1,\ldots,m\}$.
It is straightforward to derive that $0 \le \phi_k \le 1$ for all $k$ and
$$
\sum_{k=1}^m \phi_k(s)=1, \ \text{for} \ s \le 0,
$$
and $\lim_{s\to -\infty} \phi_k(s)=1/m$ for $k=1,\ldots,m$.
On the other hand, the matrix $(c_{ij})_{i,j=1}^m$
has a simple eigenvalue $0$, and its other eigenvalues
$\lam_1,\ldots,\lam_{m-1}$ have
positive real parts (see \cite{CLLN} for details).
Hence $\phi_k$ can be written as
\begin{equation*} \label{cij-1}
\phi_k(s) = \frac{1}{m} + \sum_{l=1}^{m-1} a_{kl} e^{\lam_l s}
\end{equation*}
for some constants $a_{kl} \in \C$ for $1 \le k\le m,\  1 \le l \le m-1$.
We then use the fact that $\text{Re} \lam_l >0$ to get
\begin{equation} \label{cij-2}
\int_{-\infty}^0 |\phi_i(s)-\phi_j(s)|\,ds \le C, \ \text{for}\  1 \le i,j \le m.
\end{equation}
Let us emphasize that \eqref{cij-2} is the key point here.

Next, we can also obtain
\begin{multline}\notag
u_i(x,t)=\inf \Big \{ \int_{-t}^0 \sum_{k=1}^m \phi_k(s) L_k(\gam(s),\dot \gam(s))\,ds \\
+\sum_{k=1}^m \phi_k(-t) g_k(\gam(-t))|\
\gam \in \AC([-t,0]),\ \gam(0)=x \Big \}
\end{multline}
by a similar way for the solution of \eqref{general-system} with 
the initial value $u_i(\cdot,0)=g_i$ on $\T^n$ for any $g_i\in C(\T^n)$. 
In order to prove Theorem \ref{thm:main},
we only need to verify the following key estimate concerning the coupling terms, which is similar to \eqref{key2},
\begin{equation*}\label{cij-3}
\big | \int_{-\infty}^ 0 \sum_{k=1}^m \phi_k(t) \sum_{j=1}^m c_{kj} v_j(\gam(t))\,dt \big | \le C.
\end{equation*}
One can see that the above follows directly from \eqref{cij-2} as
\begin{align*}
&\big | \int_{-\infty}^ 0 \sum_{k=1}^m \phi_k(t) \sum_{j=1}^m c_{kj} v_j(\gam(t))\,dt \big |\\
=\, & \big | \int_{-\infty}^ 0 \sum_{k=1}^m \phi_k(t) \sum_{j=1}^m c_{kj} v_j(\gam(t))\,dt-
\int_{-\infty}^ 0 \sum_{k=1}^m \phi_i(t) \sum_{j=1}^m c_{kj} v_j(\gam(t))\,dt \big | \\
\le \,& C \int_{-\infty}^0 \sum_{k=1}^m |\phi_i(t)-\phi_k(t)|\,dt \le C.
\end{align*}
\end{rem}


\section{Appendix}

Let $u_i$ be the value function associated with 
\eqref{def-value}, or equivalently the function defined by 
the right hand side of \eqref{C-rep}.

\begin{prop}[Dynamic Programming Principle]\label{prop:DPP}
For any $x\in \R^n$, $0\le h \le t$ and $i=1,2$, we have
\begin{multline}\label{C-DPP}
u_i(x,t)=\inf \Big \{
\int_{-h}^{0} 
\dfrac{1}{2}(1+e^{2s}) L_i(\gam(s),\dot \gam(s))\,ds 
+ \dfrac{1}{2}(1+e^{-2h})u_{i}(\gam(-h),t-h)\\
+
\int_{-h}^0 \dfrac{1}{2}(1-e^{2s}) L_j(\gam(s),\dot \gam(s))\,ds 
+ \dfrac{1}{2}(1-e^{-2h})u_{j}(\gam(-h),t-h)\
\mid \ \gam \in \AC([-h,0]),\ \gam(0)=x
\Big \}.
\end{multline}
\end{prop}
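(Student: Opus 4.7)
The plan is to derive \eqref{C-DPP} from the stochastic representation \eqref{def-value} by combining Fubini's theorem on $[-h,0]$ with the Markov property of $\nu$ at time $-h$. I would establish the two inequalities separately.

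For the inequality $u_i(x,t)\le$ RHS, I fix any $\gam\in\AC([-h,0])$ with $\gam(0)=x$ and any $\eta\in\AC([-t,-h])$ with $\eta(-h)=\gam(-h)$, and denote by $\bar\gam\in\AC([-t,0])$ their concatenation. Then \eqref{def-value} gives
\[
u_i(x,t)\le \E_i\Big[\int_{-t}^0 L_{\nu(s)}(\bar\gam(s),\dot{\bar\gam}(s))\,ds + g_{\nu(-t)}(\bar\gam(-t))\Big].
\]
I split this at $-h$. For the $[-h,0]$ piece I interchange $\E_i$ and $\int_{-h}^0$ by Fubini and apply Lemma \ref{lem:half2} pointwise in $s$, producing the deterministic expression
\[
\int_{-h}^0 \Big[\tfrac{1}{2}(1+e^{2s})L_i(\gam(s),\dot\gam(s))+\tfrac{1}{2}(1-e^{2s})L_j(\gam(s),\dot\gam(s))\Big]\,ds.
\]
For the $[-t,-h]$ piece combined with the terminal cost I condition on $\nu(-h)$. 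By the Markov property of $\nu$, given $\nu(-h)=k$, the shifted process $\tilde\nu(s):=\nu(s-h)$ on $[-(t-h),0]$ has the law of the original chain started at $k$, so that with $\tilde\eta(s):=\eta(s-h)$,
\[
\E_i\Big[\int_{-t}^{-h} L_{\nu(s)}(\eta,\dot\eta)\,ds+g_{\nu(-t)}(\eta(-t))\,\Big|\,\nu(-h)=k\Big]=\E_k\Big[\int_{-(t-h)}^0 L_{\tilde\nu(s)}(\tilde\eta,\dot{\tilde\eta})\,ds+g_{\tilde\nu(-(t-h))}(\tilde\eta(-(t-h)))\Big].
\]
Taking the infimum over $\eta$ for fixed $\gam(-h)$ identifies the right-hand side with $u_k(\gam(-h),t-h)$, and Lemma \ref{lem:half1} supplies the unconditional weights $\bP_i(\nu(-h)=i)=\tfrac{1}{2}(1+e^{-2h})$ and $\bP_i(\nu(-h)=j)=\tfrac{1}{2}(1-e^{-2h})$. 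The infimum over $\gam$ then yields the upper bound.

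For the reverse inequality, given any $\gam\in\AC([-t,0])$ with $\gam(0)=x$, I run exactly the same decomposition on the expected cost associated with $\gam$. The $[-t,-h]$ tail, after conditioning and shifting, has expected cost at least $u_k(\gam(-h),t-h)$ by the very definition \eqref{def-value} applied at the horizon $t-h$, while the $[-h,0]$ piece is again expressed by Fubini and Lemma \ref{lem:half2} in terms of $\gam|_{[-h,0]}$. Hence the original expected cost dominates the expression inside the infimum in \eqref{C-DPP} evaluated at $\gam|_{[-h,0]}$, and taking the infimum over the full $\gam$ on the left yields $u_i(x,t)\ge$ RHS.

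The only subtle step is the interchange of the infimum over tail trajectories with the conditional expectation at $-h$: this is clean here because once one has taken the expectation over $\nu|_{[-t,-h]}$ for fixed $\gam(-h)$, the tail cost collapses to the deterministic function $u_k(\gam(-h),t-h)$, so no measurable selection argument is needed. Equally important is applying Fubini on $[-h,0]$ \emph{before} any conditioning on $\nu(-h)$: this is what lets Lemma \ref{lem:half2} be invoked directly with the unconditional law of $\nu(s)$, avoiding the (more delicate) bridge law of $\nu$ on $[-h,0]$ given $\nu(-h)$.
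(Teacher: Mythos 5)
Your argument is the probabilistic avatar of the paper's: the paper splits the trajectory at $-h$ and uses the purely algebraic weight identity
\[
\tfrac{1}{2}(1\pm e^{2(s-h)})=\tfrac{1}{2}(1+e^{-2h})\cdot\tfrac{1}{2}(1\pm e^{2s})+\tfrac{1}{2}(1-e^{-2h})\cdot\tfrac{1}{2}(1\mp e^{2s}),
\]
which is exactly the deterministic incarnation of the Markov property that you invoke by conditioning on $\nu(-h)$. Your $\ge$ direction (tail cost dominates $u_{\nu(-h)}$, Fubini on $[-h,0]$, take infimum over the full $\gam$) reproduces faithfully the one inequality the paper actually writes out.

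The gap is in the $\le$ direction, at the sentence \emph{``Taking the infimum over $\eta$ for fixed $\gam(-h)$ identifies the right-hand side with $u_k(\gam(-h),t-h)$.''} After conditioning on $\nu(-h)$, the tail contribution is
\[
\sum_{k}\bP_i(\nu(-h)=k)\,\E_k\Bigl[\int_{-(t-h)}^{0}L_{\tilde\nu(s)}(\tilde\eta,\dot{\tilde\eta})\,ds+g_{\tilde\nu(-(t-h))}(\tilde\eta(-(t-h)))\Bigr]
\]
with the \emph{same} $\tilde\eta$ appearing in both summands. Taking the infimum over the shared $\eta$ therefore gives
\[
\inf_{\eta}\sum_{k}p_k\,\E_k[\,\cdot\,]\;\ge\;\sum_{k}p_k\,\inf_{\eta}\E_k[\,\cdot\,]=\sum_{k}p_k\,u_k(\gam(-h),t-h),
\]
which is the \emph{wrong} direction for an upper bound on $u_i(x,t)$: a single deterministic continuation $\eta\in\AC([-t,-h])$ cannot in general be simultaneously near-optimal for both $u_1(\gam(-h),t-h)$ and $u_2(\gam(-h),t-h)$, since the minimizing curves for the two modes generically differ. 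So your concatenation argument only re-derives $u_i(x,t)\ge w_i(x,t,h)$, which you already have from the other direction. Your final paragraph about ``the tail cost collapses to $u_k$'' glosses over precisely this point: the collapse only happens after an infimum that you are not entitled to slide past the convex combination. The paper's own proof displays only the $\ge$ inequality and dismisses the rest as ``similar''; your attempt shows that the $\le$ direction is in fact the delicate one, and closing it requires either allowing the tail curve to be chosen as a function of the realized mode $\nu(-h)$ (adapted continuations, as in the PDMP framework), or a separate argument entirely (e.g.\ via the viscosity-solution characterization and comparison), rather than concatenation with a single deterministic $\eta$.
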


\begin{proof}
We denote by $w_i(x,t,h)$ the right hand side of \eqref{C-DPP}. 
For any $\gam \in \AC([-t,0])$ with $\gam(0)=x$,
set $\eta(s)=\gam(s-h)$ for $s\in [-t+h,0]$. 
Note that for $s<0$,
\[
\frac{1}{2}(1\pm e^{2(s-h)})
=
\frac{1}{2}(1+ e^{-2h})\cdot\frac{1}{2}(1\pm e^{2s})
+
\frac{1}{2}(1- e^{-2h})\cdot\frac{1}{2}(1\mp e^{2s}), 
\]
which actually comes from the 
memoryless property of 
Markov processes. We have 
\begin{align*}
&\int_{-t}^0 \dfrac{1}{2}(1+e^{2s}) L_i(\gam(s),\dot \gam(s))\,ds 
+ \dfrac{1}{2}(1+e^{-2t})g_{i}(\gam(-t))\\
&+
\int_{-t}^{0} \dfrac{1}{2}(1-e^{2s}) L_j(\gam(s),\dot \gam(s))\,ds 
+ \dfrac{1}{2}(1-e^{-2t})g_{j}(\gam(-t))\\
=&
\int_{-h}^0 \dfrac{1}{2}(1+e^{2s}) L_i(\gam(s),\dot \gam(s))\,ds
+\int_{-h}^0 \dfrac{1}{2}(1-e^{2s}) L_j(\gam(s),\dot \gam(s))\,ds \\
&+\dfrac{1}{2}(1+e^{-2h}) \Big [
\int_{-t+h}^{0} \dfrac{1}{2}(1+e^{2s}) L_i(\eta(s),\dot \eta(s))\,ds
+ \dfrac{1}{2}(1+e^{-2(t-h)})g_{i}(\eta(-t+h))\\
&+\int_{-t+h}^{0} \dfrac{1}{2}(1-e^{2s}) L_j(\eta(s),\dot \eta(s))\,ds
+ \dfrac{1}{2}(1-e^{-2(t-h)})g_{j}(\eta(-t+h))
\Big ]\\
&+\dfrac{1}{2}(1-e^{-2h}) \Big [
\int_{-t+h}^{0} \dfrac{1}{2}(1-e^{2s}) L_i(\eta(s),\dot \eta(s))\,ds
+ \dfrac{1}{2}(1-e^{-2(t-h)})g_{i}(\eta(-t+h))\\
&+\int_{-t+h}^{0} \dfrac{1}{2}(1+e^{2s}) L_j(\eta(s),\dot \eta(s))\,ds
+ \dfrac{1}{2}(1+e^{-2(t-h)})g_{j}(\eta(-t+h))
\Big ]\\
\ge  &
\int_{-h}^{0} \dfrac{1}{2}(1+e^{2s}) L_i(\gam(s),\dot \gam(s))\,ds 
+ \dfrac{1}{2}(1+e^{-2h})u_{i}(\gam(-h),t-h)\\
&+
\int_{-h}^{0} \dfrac{1}{2}(1-e^{2s}) L_j(\gam(s),\dot \gam(s))\,ds 
+ \dfrac{1}{2}(1-e^{-2h})u_{j}(\gam(-h),t-h),
\end{align*}
which implies $u_i(x,t) \ge w_i(x,t,h)$ for $i=1,2$.

We also can prove the other inequalities by a similar way. 
Thus, we omit the details. 
\end{proof}

\begin{prop}\label{prop:uni-conti}
The functions $u_i$ are continuous on $\cQ$. 
\end{prop}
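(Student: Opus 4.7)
My plan is to establish continuity of $u_i$ on $\cQ$ by proving that $u_i$ is both upper and lower semicontinuous at every point, with the representation formula \eqref{C-rep} and the dynamic programming principle \eqref{C-DPP} as the main tools. A preliminary observation is that $u_i$ is bounded on $\T^n \times [0,T]$ for every $T > 0$: the upper bound is obtained by plugging the constant curve $\gam \equiv x$ into \eqref{C-rep} (using that $L_k(\cdot, 0) \in C(\T^n)$ by coercivity of $H_k$), and the lower bound follows from $L_k(x, q) \ge -H_k(x, 0)$. Throughout, the weights $\tfrac{1}{2}(1 \pm e^{2s})$ are continuous and bounded by $1$, so they cause no trouble in semicontinuity arguments.

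For upper semicontinuity at $(x_0, t_0) \in \cQ$ with $t_0 > 0$: given $\ep > 0$, fix an $\ep$-near-minimizer $\gam \in \AC([-t_0, 0])$ for $u_i(x_0, t_0)$ in \eqref{C-rep}. For $(y, s)$ close to $(x_0, t_0)$, I construct an admissible competitor $\gam_{y,s} \in \AC([-s, 0])$ with $\gam_{y,s}(0) = y$ by combining a time rescaling between $[-s, 0]$ and $[-t_0, 0]$ with a linear correction sending $x_0$ to $y$; for instance $\gam_{y,s}(\tau) = \gam(\tau t_0/s) + (1 + \tau/s)(y - x_0)$ works. As $(y, s) \to (x_0, t_0)$, continuity of $L_i$, $L_j$, $g_i$, $g_j$ together with the integrability of $L_k(\gam, \dot\gam)$ along the near-minimizer imply that the functional value on $\gam_{y,s}$ converges to that on $\gam$; this yields $\limsup u_i(y,s) \le u_i(x_0, t_0) + \ep$, and sending $\ep \to 0^+$ completes the argument. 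The boundary case $t_0 = 0$ is handled directly by the constant curve: $u_i(y, s) \le \tfrac{1}{2}(1+e^{-2s}) g_i(y) + \tfrac{1}{2}(1 - e^{-2s}) g_j(y) + O(s) \to g_i(x_0)$.

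For lower semicontinuity at $(x_0, t_0)$: take $(y_n, s_n) \to (x_0, t_0)$ and near-minimizers $\gam_n \in \AC([-s_n, 0])$ for $u_i(y_n, s_n)$. The boundedness of $u_i$ together with the lower bounds on $L_k$ and $g_k$ gives a uniform bound on $\int L_i(\gam_n, \dot\gam_n)\,ds$; the superlinearity of $L_i$ inherited from coercivity (A1) and strict convexity (A2) then provides equi-integrability of $|\dot\gam_n|$. After extending $\gam_n$ constantly to $[-t_0 - 1, 0]$, Arzel\`a--Ascoli and Dunford--Pettis extract a subsequence converging uniformly to some $\gam \in \AC([-t_0, 0])$ with $\gam(0) = x_0$ and $\dot\gam_n \rightharpoonup \dot\gam$ weakly in $L^1$. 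A Tonelli-type lower semicontinuity result for the convex weighted functional (essentially the argument already exhibited in the proof of Lemma \ref{extrem-lem2}) then gives $u_i(x_0, t_0) \le \liminf_n u_i(y_n, s_n)$.

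The main obstacle is the LSC step, which needs both compactness of near-minimizers and lower semicontinuity of the cost under the resulting weak convergence. Both are supplied by the superlinearity of $L_i$, which follows from coercivity of $H_i$ together with convexity; no additional hypotheses beyond (A1)--(A2) are required, so in particular the stronger assumptions (A3)--(A4) invoked in Section \ref{extrem-exist} are not needed here. The coupling between the two equations enters only through the bounded continuous weights $\tfrac{1}{2}(1 \pm e^{2s})$, which are absorbed transparently into both semicontinuity estimates.
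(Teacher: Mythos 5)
Your approach is genuinely different from the paper's. You attack continuity directly via upper and lower semicontinuity, using Tonelli-type compactness and lower semicontinuity of the variational functional. The paper instead first works under the extra hypothesis that $g_1,g_2$ are Lipschitz, establishes the quantitative estimate $|u_i(x,t)-g_i(x)|\le C_1 t$ by comparing $u_i$ from below with the explicit subsolution $g_i(x)-M_1 t$ (via Proposition \ref{prop:sub2}) and from above with the constant curve, feeds this into the dynamic programming principle to get Lipschitz continuity in $t$ and then in $x$ using a short linear segment with velocity $\rho$ coming from coercivity, and finally removes the Lipschitz assumption on the data by a uniform-convergence/approximation argument. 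The paper's route also yields Lipschitz continuity when the data are Lipschitz, which is reused in the proof of Theorem \ref{thm:representation}; your route only delivers plain continuity.

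There is a genuine gap in your upper-semicontinuity step. You take a near-minimizer $\gam$ for $u_i(x_0,t_0)$ and form the competitor $\gam_{y,s}(\tau)=\gam(\tau t_0/s)+(1+\tau/s)(y-x_0)$, then claim that continuity of $L_i,L_j,g_i,g_j$ together with integrability of $L_k(\gam,\dot\gam)$ force the cost of $\gam_{y,s}$ to converge to that of $\gam$. Under (A1)--(A2) alone, $L_i=H_i^{*}$ is only known to be a proper convex lsc integrand with $0$ in the interior of its effective domain; $\dom L_i$ may be a bounded set (e.g.\ $H(p)=\sqrt{1+|p|^2}$ gives $\dom L=\overline{B}(0,1)$), and for $s<t_0$ the factor $t_0/s>1$ multiplying $\dot\gam$ can push $\dot\gam_{y,s}$ outside $\dom L_i$, making the competitor cost $+\infty$. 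Even when $\dom L_i=\R^n$, the claimed passage to the limit requires a dominating function, and $\int L_i(\gam,\dot\gam)\,ds<\infty$ does not by itself dominate $\int L_i(\gam,(1+\delta)\dot\gam)\,ds$ (think of superquadratic $L$). Fixing this requires either truncating the near-minimizer to a uniformly Lipschitz curve whose velocity stays in $\operatorname{int}(\dom L_i)$ (a nontrivial step you omit), or restricting to quadratic bounds as in (A3)--(A4), which you explicitly disavow. The paper sidesteps the issue entirely: it never touches near-minimizers, and the only auxiliary curves it uses are constant or linear with speed $\le\rho$, where $\rho$ is the coercivity radius for which $L_i(x,\xi)\le C$ on $B(0,\rho)$. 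Your lower-semicontinuity step is sound in spirit (superlinearity of $L_i$ is indeed free from continuity and convexity of $H_i$, giving de la Vall\'ee Poussin equi-integrability), though it should be spelled out at $t_0=0$, where you must also argue that $\gam_n(-s_n)\to x_0$ using superlinearity before the terminal term can be controlled.
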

\begin{proof}
We first prove that $u_i$ are Lipschitz continuous on $\cQ$ 
under the additional assumption that $g_i$ are Lipschitz continuous 
on $\T^n$. 
This additional requirement on $g_i$ will be removed at the end of 
the proof.

We may choose a constant $M_1>0$ so that 
$H_i(x,Dg_i(x))+(g_i-g_j)(x)\le M_1$ for a.e. $x\in\T^n$. 
It is clear that the function 
$v_i(x,t):=g_i(x)-M_1 t$ on $\cQ$ is a subsolution of 
(C). 

By a similar argument to the proof of Proposition \ref{prop:sub2} we obtain 
\[
v_i(x,t)\le \E_i\Big[
\int_{-t}^0L_{\nu(s)}(\gam(s),\dot\gam(s))\,ds
+v_{\nu(-t)}(\gam(-t),0)\Big]
\]
for all $(x,t)\in\cQ$ 
and $\gam\in\AC([-t,0])$ with $\gam(0)=x$, from which we get
$g_i(x)-M_1t\le u_i(x,t)$ for all $(x,t)\in\cQ$.

It follows from \eqref{C-rep} that
\[
u_i(x,t)\le
\E_i\Big[\int_{-t}^0L_{\nu(s)}(x,0)\,ds
+g_{\nu(-t)}(x)\Big]
\le 
g_i(x)+C_1t
\]
for
$C_1:=\max\{M_1, \max_{i=1,2;\, x\in \T^n} |L_i(x,0)|+\max_{x \in \T^n} |g_1(x)-g_2(x)| \}$ ,
and all $(x,t)\in\cQ$.  
Therefore we get 
\begin{equation}\label{conti-ini}
|u_i(x,t)-g_i(x)|\le C_1 t \ 
\text{for all} \ (x,t)\in\cQ.
\end{equation} 
Now, for any $(x,t) \in \Q$ and $h>0$, by Dynamic Programming Principle \eqref{C-DPP},
\begin{multline}\notag
u_i(x,t+h)=\inf \Big \{
\int_{-t}^{0} 
\dfrac{1}{2}(1+e^{2s}) L_i(\gam(s),\dot \gam(s))\,ds 
+ \dfrac{1}{2}(1+e^{-2t})u_{i}(\gam(-t),h)\\
+
\int_{-t}^0 \dfrac{1}{2}(1-e^{2s}) L_j(\gam(s),\dot \gam(s))\,ds 
+ \dfrac{1}{2}(1-e^{-2t})u_{j}(\gam(-t),h)\
\mid \ \gam \in \AC([-t,0]),\ \gam(0)=x
\Big \}.
\end{multline}
 By \eqref{conti-ini}, $|u_i(\gam(-t),h)-g_i(\gam(-t),h)| \le C_1 h$ for $i=1,2$. 
Hence, we derive that
\begin{equation}\label{Lip-t}
|u_i(x,t+h)-u_i(x,t)| \le C_1 h.
\end{equation}

We next prove that $u_i$ are Lipschitz {continuous} in $x$ for $i=1,2$.
Fix $x,y\in\T^n$ with $x\not=y$ and $t>0$. 
In view of {the coercivity} of $H_i$, there exists a constant $\rho>0$ 
such that $L_i(x, \xi)\le C$ for all $x \in\T^n$ and $\xi \in B(0, \rho)$ 
(see \cite[Proposition 2.1]{I2008}).
Set $\tau:=|x-y|/\rho$ and 
we first consider the case where $\tau<t$. 
Set
$\eta(s):=y-s\rho(x-y)/|x-y|$ for $s\in[-\tau,0]$. 
Note that $\eta\in \AC([-t,0])$, $\eta(0)=y$ and $\eta(-\tau)=x$.
By Dynamic Programming Principle \eqref{C-DPP}, \eqref{conti-ini} and \eqref{Lip-t},
\begin{align*}
u_i(y,t)\le&\, 
\E_i\Big[
\int_{-\tau}^0 L_{\nu(s)}(\eta(s),\dot \eta(s))\,ds 
+ u_{\nu(-\tau)}(\eta(-\tau),t-\tau)\Big]\\
\le&\,
C\tau+
\dfrac{1}{2}(1+e^{-2\tau}) u_i(x,t-\tau)+
\dfrac{1}{2}(1-e^{-2\tau}) u_j(x,t-\tau)\\
\le&\,
(C+2C_1t)\tau + u_i(x,t) \le C |x-y| + u_i(x,t),
\end{align*}

By symmetry we conclude 
$|u_{i}(x,t)-u_{i}(y,t)|\le C|x-y|$, where $C$ depends on $t$ as calculated above.
Notice that this is just a fairly crude estimate, but it is good enough for our presentation here.

We consider the case where $t\le\tau$. 
By \eqref{conti-ini}, 
\begin{eqnarray*}
|u_i(x,t)-u_i(y,t)|
&\le&|u_i(x,t)-g_{i}(x)|+|g_{i}(x)-g_{i}(y)|+|g_{i}(y)-u_i(y,t)|\\
&\le&2Ct+M|x-y|\le C|x-y|,
\end{eqnarray*}
where $M:=\max_{i=1,2}\|Dg_{i}\|_{L^\infty(\T^n)}$. 
Thus, we get 
$|u_i(x,t)-u_i(y,t)|\le C|x-y|$ 
for all $x,y\in \T^n, t\ge 0$ and $i=1,2$. 

We finally remark that 
we can deduce the continuity of $u_i$ 
by using an approximation argument.  
We may choose a sequence 
$\{g_{i}^{k}\}_{k\in\N}$ of Lipschitz continuous functions  
so that $\|g_{i}^{k}-g_{i}\|_{\Li(\T^n)} \le 1/k$ 
for all $k\in\N$. 
Let $u_{i}^{k}$ be functions defined by \eqref{C-rep} with given $g_i^k$. 
By comparison through the formulas
for $u_{i}$ and $u_{i}^{k}$, we see that 
$|u_{i}(x,t)-u_{i}^{k}(x,t)|\le \max_{\T^n}|g_i-g_i^k|$.
Since $u_{i}^{k}\in {\rm C}(\cQ)$ by the above argument
for all $k\in\N$ and  
$u_{i}^k$ converges uniformly to $u_i$ on $\cQ$,  
we obtain $u_i\in {\rm C}(\cQ)$. 
\end{proof}

\begin{proof}[Proof of Theorem {\rm \ref{thm:representation}}]
It is clear that $(u_1,u_2)(\cdot,0)=(g_{1},g_{2})$ on $\T^n$.
We now prove that $u_1$ is a subsolution of (C). Take a test function
$\phi \in C^1(\Q)$ such that $u_1-\phi$ has a maximum at $(x_0,t_0) \in \Q$
and $(u_1-\phi)(x_0,t_0)=0$.
Take $h>0$ small enough. By Proposition \ref{prop:DPP},
\begin{multline*}
u_1(x_0,t_0) \le 
\int_{-h}^0 \dfrac{1}{2}(1+e^{2s}) L_1(\gam(s),\dot \gam(s))\,ds + \dfrac{1}{2}(1+e^{-2h})u_{1}(\gam(-h),t_0-h)\\
+
\int_{-h}^0 \dfrac{1}{2}(1-e^{2s}) L_2(\gam(s),\dot \gam(s))\,ds + \dfrac{1}{2}(1-e^{-2h})u_{2}(\gam(-h),t_0-h)
\end{multline*}
for any $\gam \in \AC([-h,0])$ with $\gam(0)=x_0$ and $\dot \gam(0)=q \in \R^n$.
We now use the fact $u_1(\gam(-h),t_0-h) \le \phi(\gam(-h),t_0-h)$ to plug into the above to derive that
\begin{multline*}
\dfrac{\phi(\gam(0),t_0)-\phi(\gam(-h),t_0-h)}{h} 
\le
\dfrac{1}{h} \int_{-h}^{0} \dfrac{1}{2}(1+e^{2s}) L_1(\gam(s),\dot \gam(s))\,ds\\
+\dfrac{1}{h}\int_{-h}^{0} \dfrac{1}{2}(1-e^{2s}) L_2(\gam(s),\dot \gam(s))\,ds+
 \dfrac{1-e^{-2h}}{2h}(u_{2}-u_1)(\gam(-h),t_0-h).
\end{multline*}
Sending $h\to 0$, we obtain
$$
\phi_t(x_0,t_0)+D\phi(x_0,t_0) \cdot q \le L_1(x_0,q)+(u_2-u_1)(x_0,t_0) \
\text{for all} \ q \in \R^n,
$$
which implies 
$\phi_t(x_0,t_0)+H_1(x_0,D\phi(x_0,t_0))+(u_1-u_2)(x_0,t_0) \le 0$.

Next we prove that $u_1$ is a supersolution of (C). Take a test function
$\phi \in C^1(\Q)$ such that $u_1-\phi$ has a minimum at $(x_0,t_0)\in \Q$
and $(u_1-\phi)(x_0,t_0)=0$.
Take $h>0$ small enough. By Proposition \ref{prop:DPP},
there exists $\gam_h \in \AC([-h,0])$ with $\gam_h(0)=x_0$ such that
\begin{multline*}
u_1(x_0,t_0) +h^2 
\ge \int_{-h}^{0} \dfrac{1}{2}(1+e^{2s}) L_1(\gam_h(s),\dot \gam_h(s))\,ds 
+ \dfrac{1}{2}(1+e^{-2h})u_{1}(\gam_h(-h),t_0-h)\\
+
\int_{-h}^{0} \dfrac{1}{2}(1-e^{2s}) L_2(\gam_h(s),\dot \gam_h(s))\,ds + \dfrac{1}{2}(1-e^{-2h})u_{2}(\gam_h(-h),t_0-h).
\end{multline*}
We use $u_1(\gam_h(-h),t_0-h) \ge \phi(\gam_h(-h),t_0-h)$ in the above to yield
\begin{multline}\label{veri-4}
\dfrac{\phi(\gam_h(0),t_0)-\phi(\gam_h(-h),t_0-h)}{h} +h \ge 
\dfrac{1}{h} \int_{-h}^{0} \dfrac{1}{2}(1+e^{2s}) 
L_1(\gam_h(s),\dot \gam_h(s))\,ds\\
+\dfrac{1}{h}\int_{-h}^{0} \dfrac{1}{2}(1-e^{2s}) 
L_2(\gam_h(s),\dot \gam_h(s))\,ds+
 \dfrac{1-e^{-2h}}{2h}(u_{2}-u_1)(\gam(-h),t_0-h).
\end{multline}
On the other hand,
\begin{align}
&\dfrac{\phi(\gam_h(0),t_0)-\phi(\gam_h(-h),t_0-h)}{h} \nonumber\\
=&\,
\dfrac{1}{h} \int_{-h}^{0} 
\phi_t(\gam_h(s),t_0-s)+D\phi(\gam_h(s),t_0-s)\cdot \dot \gam_h(s)\,ds\nonumber\\
\le&\, 
\dfrac{1}{h} \int_{-h}^{0} \phi_t(\gam_h(s),t_0-s)\,ds\nonumber\\
&\, 
+\dfrac{1}{h} \int_{-h}^{0} \dfrac{1}{2}(1+e^{2s}) 
\Big\{
L_1(\gam_h(s),\dot \gam_h(s))+H_1(\gam_h(s),D\phi(\gam_h(s),t_0-s))
\Big\}\,ds\nonumber\\
&+\dfrac{1}{h}\int_{-h}^{0} \dfrac{1}{2}(1-e^{2s}) 
\Big\{ L_2(\gam_h(s),\dot \gam_h(s))+H_2(\gam_h(s),D\phi(\gam_h(s),t_0-s))\Big\}\,ds.
\label{veri-5}
\end{align}
Combine \eqref{veri-4}, \eqref{veri-5} and then send $h\to 0$ to get 
\[
\phi_t(x_0,t_0)+H_1(x_0,D\phi(x_0,t_0))+(u_1-u_2)(x_0,t_0) \ge 0. 
\]
{It is easy to see the uniform continuity of $u_i$ due to the coercivity 
of Hamiltonians.}
\end{proof}

\noindent
{\bf Acknowledgement.} 
The authors are grateful to Professor Toshio Mikami for discussions 
which has been of help for them to come to Theorem \ref{thm:representation}. 
We also thank Professor Naoyuki Ichihara for his useful comments on Lemma \ref{lem:half1} 
and Professor Andrea Davini for pointing out a gap in the first preprint.
The work of H. M. was partly supported by by Grant-in-Aid for Scientific Research, No.24840043 (Research Activity Start-up) and Grant for Basic Science Research Projects from the 
Sumitomo Foundation. 

\bibliographystyle{amsplain}
\providecommand{\bysame}{\leavevmode\hbox to3em{\hrulefill}\thinspace}
\providecommand{\MR}{\relax\ifhmode\unskip\space\fi MR }
\providecommand{\MRhref}[2]{%
  \href{http://www.ams.org/mathscinet-getitem?mr=#1}{#2}
}
\providecommand{\href}[2]{#2}

\end{document}